\newcommand{\Real}{\mathbb{R}}
\newcommand{\Nat}{\mathbb{N}}
\newcommand{\Acal}{\mathcal{A}}
\newcommand{\Hcal}{\mathcal{H}}
\newcommand{\Ucal}{\mathcal{U}}
\newcommand{\rank}{\mathrm{rank}}
\newcommand{\ra}{\rightarrow}
\newcommand{\dd}[1]{\tfrac{\mathrm{d}}{\mathrm{d}#1}}
\newcommand{\bbm}{\begin{bmatrix}}
\newcommand{\ebm}{\end{bmatrix}}
\newcommand{\cas}[1]{\left\{\begin{split}#1\end{split}\right.}
\newcommand{\bit}{\begin{itemize}}
\newcommand{\eit}{\end{itemize}}
\newtheorem{theorem}{Theorem}[section]}
\newtheorem{lemma}[theorem]{Lemma}}
\newtheorem{definition}[theorem]{Definition}}
\newtheorem{remark}[theorem]{Remark}}
\newtheorem{algorithm}[theorem]{Algorithm}}
\newenvironment{proof}{{\emph{Proof:}}}{~\hfill$\square$}
\title{\LARGE \bf On the Minimum Attention and the Anytime Attention Control Problems for Linear Systems: A Linear Programming Approach\thanks{This work is partially supported by the Dutch Science Foundation (STW) and the Dutch Organization for Scientific Research (NWO) under the VICI grant ``Wireless controls systems: A new frontier in automation'', by the European 7th Framework Network of Excellence by the project ``Highly-complex and networked control systems (HYCON2-257462)'', and by the project ``Decentralised and Wireless Control of Large-Scale Systems (WIDE-224168)'', and by the National Science Foundation (NSF) award 0834771.}
\thanks{Tijs Donkers and Maurice Heemels are with the Hybrid and Networked Systems group of the Department of Mechanical Engineering of Eindhoven University of Technology, Eindhoven, the Netherlands, {\tt\footnotesize \{m.c.f.donkers, m.heemels\}@tue.nl}. Paulo Tabuada is with the Cyber-Physical Systems Laboratory of the Department of Electrical Engineering at the University of California, Los Angeles, CA, USA, {\tt\footnotesize tabuada@ee.ucla.edu}.}}
\author{M.C.F. Donkers \qquad P. Tabuada \qquad W.P.M.H. Heemels}
\begin{document}
\maketitle

\begin{abstract}
In this report, we present two control laws that are tailored for control applications in which computational and/or communication resources are scarce. Namely, we consider minimum attention control, where the `attention' that a control task requires is minimised given certain performance requirements, and anytime attention control, where the performance under the `attention' given by a scheduler is maximised. Here, we interpret `attention' as the inverse of the time elapsed between two consecutive executions of a control task. Instrumental for the solution will be a novel extension of the notion of a control Lyapunov function. By focussing on linear plants, by allowing for only a finite number of possible intervals between two subsequent executions of the control task and by taking the extended control Lyapunov function to be $\infty$-norm based, we can formulate the aforementioned control problems as linear programs, which can be solved efficiently online. Furthermore, we provide techniques to construct suitable $\infty$-norm-based (extended) control Lyapunov functions for our purposes. Finally, we illustrate the theory using two numerical examples. In particular, we show that minimum attention control outperforms an alternative implementation-aware control law available in the literature.
\end{abstract}

\section{Introduction}\label{sec5:introduction}

A current trend in control engineering is to no longer implement controllers on dedicated platforms having dedicated communication channels, but in embedded microprocessors and using (shared) communication networks. Since in such an environment the control task has to share computational and communication resources with other tasks, the availability of these resources is limited and might even be time-varying. Despite the fact that resources are scarce, controllers are typically still implemented in a time-triggered fashion, in which the control task is executed periodically. This design choice is motivated by the fact that it enables the use of the well-developed theory on sampled-data systems, e.g., \cite{che_fra_BOOK95,ast_wit_BOOK97}, to design controllers and analyse the resulting closed-loop systems. This design choice, however, leads to over-utilisation of the available resources and requires over-provisioned hardware, as it might not be necessary to execute the control task every period. For this reason, several alternative control strategies have been developed to reduce the required computation and communication resources needed to execute the control task.

Two of such approaches are event-triggered control, see, e.g., \cite{tab_TAC07,hee_san_bos_IJC08,hen_joh_cer_AUT08,lun_leh_AUT10}, and self-triggered control, see, e.g., \cite{vel_fue_mar_RTSS03,wan_lem_TAC09,maz_ant_tab_ECC09}. In event-triggered control and self-triggered control, the control law consists of two elements: namely, a feedback controller that computes the control input, and a triggering mechanism that determines when the control task should be executed. The difference between event-triggered control and self-triggered control is that in the former the triggering mechanism uses current measurements, while in the latter it uses predictions using previously sampled and transmitted data and knowledge on the plant dynamics, meaning that it is the controller itself that triggers the execution of the control task. Current design methods for event-triggered control and self-triggered control are emulation-based approaches, by which we mean that the feedback controller is designed for an ideal implementation, while subsequently the triggering mechanism is designed (based on the given controller). Since the feedback controller is designed before the triggering mechanism, it is difficult, if not impossible, to obtain an optimal design of the combined feedback controller and triggering mechanism in the sense that the minimum number of controller executions is achieved while guaranteeing stability and a certain level of closed-loop performance. Hence, no solution to the codesign problem currently exists.

An alternative way to handle limited computation and communication resources is by using so-called \emph{anytime control} methods, see, e.g., \cite{gup_que_CDC10,gre_fon_bic_TAC11,gup_CDC09}. These are control laws that are able to compute a control input, given a certain minimum amount of computation resources allotted by a scheduler, while providing a `better' control input whenever more computation resources are available. What is meant by `better', varies from computing more control inputs \cite{gup_CDC09}, computing more future control inputs \cite{gup_que_CDC10}, or computing the control input using a higher-order dynamical controller \cite{gre_fon_bic_TAC11}.

In this report, we consider two methodologies that are able to handle scarcity in computation and communication resources. The first methodology adopts  minimum attention control (MAC), see \cite{bro_CDC97}, in which the objective is to minimise the attention the control loop requires, i.e., MAC maximises the next execution instant, while guaranteeing a certain level of closed-loop performance. Note that this control strategy is similar to self-triggered control, where also the objective is to have as few control task executions as possible, given a certain closed-loop performance requirement. However, contrary to self-triggered control, MAC is typically not designed using emulation-based approaches in the sense that it does not require a separate feedback controller to be available before the triggering mechanism can be designed. Clearly, this joint design procedure is more likely to yield a (close to) optimal design than a sequential design procedure would. The second methodology proposed in this report is more in line with anytime control, as discussed above. Namely, by assuming that after each execution of the control task, the control input cannot be recomputed for a certain amount of time that is specified by a scheduler, anytime attention control (AAC) finds a control input that maximises the performance of the closed-loop system, given this time-varying computation constraint. This setting is realistic in many embedded and networked systems, where a real-time scheduler distributes the available resources among all tasks, and hence, determines online, the execution instants of the control task.

The control problems studied in this report are similar to the ones studied in \cite{ant_tab_CDC10}. However, by focussing on linear systems, we will propose an alternative approach to solve the control problems at hand. As was already observed in \cite{ant_tab_CDC10}, the MAC and the AAC problem are related and the same solution strategy can be used to solve both problems. We will also use the same solution strategy, yet a different one than used in \cite{ant_tab_CDC10}, to solve the both problems in this report. In the solution strategy we propose, we focus on linear plants, as already mentioned, and consider only a finite number of possible interexecution times. Furthermore, we will employ control Lyapunov functions (CLFs) that can be seen as an extension of the CLFs for sampled-data systems, which will enable us to guarantee a certain level of performance. These extended CLFs will first be formulated for general sampled-data systems and will later be particularised to $\infty$-norm-based functions, see, e.g., \cite{kie_ada_ste_TAC92,pol_TAC95}. Namely, by using $\infty$-norm-based extended CLFs, we can formulate both the MAC and the AAC problem as linear programs (LPs), which can be efficiently solved online, thereby alleviating the computational burden as experienced in \cite{ant_tab_CDC10}. Furthermore, we provide techniques to construct suitable $\infty$-norm-based (extended) control Lyapunov functions for the control objectives under consideration.
We will illustrate the theory using two numerical examples. In particular, we will show that MAC outperforms the self-triggered control strategy of \cite{maz_ant_tab_ECC09}.

The remainder of this report is organised as follows. After introducing the necessary notational conventions used in this report, we formulate the MAC and the AAC problem in Section \ref{sec5:model}. In Section \ref{sec5:mainidea}, we show how the control problems can be solved using extended CLFs, in Section \ref{sec5:lyapunov}, we show how to guarantee well-defined solutions, and, in Section \ref{sec5:LP}, we present computationally tractable algorithms to solve the control problems efficiently. Finally, the presented theory is illustrated using numerical examples in Section~\ref{sec5:example} and we draw conclusions in Section \ref{sec5:conclusion}. Appendix \ref{5sec:appendix} contains the proofs of the lemmas and theorems.

\subsection{Nomenclature}

The following notational conventions will be used. For a vector $x\in\Real^n$, we denote by $[x]_i$ its $i$-th element and by $\| x \|_p := \sqrt[p]{\sum_{i=1}^n |x_i|^p}  $ its $p$-norm, $p\in\Nat$, and by $\|x\|_\infty = \max_{i=\{1,\ldots,N\}} | x_i |$, its $\infty$-norm. For a matrix $A\in\Real^{n\times m}$, we denote by $[A]_{ij}$ its $i,j$-th element, by $A^\top\in\Real^{m\times n}$ its transposed and by $\|A\|_p := \max_{x\neq0} \tfrac{\|Ax\|_p}{\|x\|_p}$, its induced $p$-norm, $p\in\Nat\cup\{\infty\}$. In particular, $\|A\|_\infty := \max_{i\in\{1,\ldots,n\}} \sum_{j=1}^m | [A]_{ij} |$. We denote the set of nonnegative real numbers by $\Real_+ := [0,\infty)$, and for a function $f:\Real_+ \ra \Real^n$, we denote the limit from above for time $t\in\Real_+$ by $\lim_{s\downarrow t} f(s)$, provided that it exists. Finally, to denote a set-valued function $F$ from $\Real^n$ to $\Real^m$, we write $F:\Real^n\hookrightarrow\Real^m$, meaning that $F(x)\subseteq\Real^m$ for each $x\in\Real^n$.


\section{Problem Formulation} \label{sec5:model}

In this section, we formulate the minimum attention and the anytime attention control problem. To do so, let us consider a linear time-invariant (LTI) plant given~by
\begin{equation}
\dd{t}x = A x + B u, \label{eq5:plant}
\end{equation}
where $x\in\Real^{n_x}$ denotes the state of the plant and $u\in\Real^{n_u}$ the input applied to the plant. The plant is controlled in a sampled-data fashion, using a zero-order hold (ZOH), which leads to
\begin{equation}
u(t) = \hat{u}_k, \quad \text{for all } t\in[t_k,t_{k+1}),\label{eq5:controllaw}
\end{equation}
where the discrete-time control inputs $\hat{u}_k$, $k\in\Nat$, and the strictly increasing sequence of execution instants $\{t_k\}_{k\in\Nat}$ are given by either one of the solutions to the following two control problems:

\begin{itemize}
\item The minimum attention control (MAC) Problem: \emph{Find a set-valued function $F_{\mathrm{MAC}}:\Real^{n_x} \hookrightarrow \Real^{n_u}$ and a function $h:\Real^{n_x}\ra\Real_+$, such that
\begin{equation}
\cas{\hat{u}_k &\in F_{\mathrm{MAC}}(x(t_k)) \\ t_{k+1}&=t_k + h(x(t_k)),}\label{eq5:def_minimum}
\end{equation}
for all $k\in\Nat$, renders the plant \eqref{eq5:plant} with ZOH \eqref{eq5:controllaw} stable and guarantees a certain level of performance, both defined in an appropriate sense, while, for each $x\in\Real^{n_x}$, $h(x)$ is as large as possible.}

\item The anytime attention control (AAC) Problem: \emph{Find a set-valued function $F_{\mathrm{AAC}}:\Real^{n_x} \times \Real_+ \hookrightarrow \Real^{n_u}$, such that
\begin{equation}
\cas{\hat{u}_k &\in F_{\mathrm{AAC}}(x(t_k),h_k) \\ t_{k+1}&=t_k + h_k,}\label{eq5:def_anytime}
\end{equation}
for all $k\in\Nat$, renders the plant \eqref{eq5:plant} with ZOH \eqref{eq5:controllaw} stable and maximises performance in an appropriate sense, assuming that $h_k$, $k\in\Nat$, is given at time $t_k$ by the real-time scheduler.}
\end{itemize}

Note that the mappings $F_{\mathrm{MAC}}$ and $F_{\mathrm{AAC}}$ in the problems above are set-valued functions, i.e., $F_{\mathrm{MAC}}(x) \subseteq\Real^{n_u}$, for all $x\in\Real^{n_x}$, and $F_{\mathrm{AAC}}(x,h)\subseteq\Real^{n_u}$, for all $x\in\Real^{n_x}$ and $h\in\Real_+$. 
This means that $\hat{u}_k$, $k\in\Nat$, can be chosen from a subset $F_{\mathrm{MAC}}(x(t_k))$ or $F_{\mathrm{AAC}}(x(t_k),h_k)$ of $\Real^{n_u}$, while still guaranteeing the required properties of the MAC and the AAC problem.

To make the preceding problems well defined we need to give a precise meaning to the terms stability and performance qualifying the solutions of the closed-loop system given by \eqref{eq5:plant}, \eqref{eq5:controllaw}, with \eqref{eq5:def_minimum} or \eqref{eq5:def_anytime}.

\begin{definition}\label{th5:GES}
The system \eqref{eq5:plant}, \eqref{eq5:controllaw}, with \eqref{eq5:def_minimum} or \eqref{eq5:def_anytime}, is said to be \emph{globally exponentially stable} (GES) with a convergence rate $\alpha>0$ and a gain $c>0$, if for any initial condition $x(0)$, the corresponding solutions satisfy
\begin{equation}
\|x(t)\| \leqslant c e^{-\alpha t} \| x(0) \| , \label{eq5:stab_def}
\end{equation}
for all $t\in\Real_+$.
\end{definition}

The notion of performance used in this report is explicitly expressed in terms of the convergence rate $\alpha$ as well as the gain $c$. Only requiring a desired convergence rate $\alpha$ (in the MAC problem), or maximising it (in the AAC problem), could yield a very large gain $c$ and, thus, could yield unacceptable closed-loop behaviour. As we will show below (see Lemma \ref{th5:stab_a}), the guaranteed gain $c$ typically becomes large when the time between two controller executions, i.e., $t_{k+1}-t_k$, is large. Therefore, special measures have to be taken to prevent the gain $c$ from becoming unacceptably large.


\section[Formulating the Control Problems using CLFs]{Formulating the Control Problems using Control Lyapunov Functions}\label{sec5:mainidea}

In this section, we will propose a solution to the two considered control problems by formulating them as optimisation problems. In these optimisation problems, we will use an extension to the notion of a control Lyapunov function (CLF). Before doing so, we will briefly revisit some existing results on CLFs, see, e.g.,~\cite{son_SIAM83,kel_tee_SCL04}, and show how they can be used to design control laws that render the plant~\eqref{eq5:plant} with ZOH \eqref{eq5:controllaw} GES with a certain convergence rate $\alpha>0$ and a certain gain $c>0$.

\subsection{Preliminary Results on CLFs}

Let us now introduce the notion of a CLF, which has been applied to discrete-time systems in \cite{kel_tee_SCL04} and will now be applied to periodic sampled-data systems, given by the plant \eqref{eq5:plant} with ZOH \eqref{eq5:controllaw}, in which $t_{k+1}=t_k+h$, $k\in\Nat$, for some fixed $h>0$.

\begin{definition}\label{th5:stab_cond_a}
Consider the plant \eqref{eq5:plant} with ZOH \eqref{eq5:controllaw}. The function $V:\Real^{n_x}\ra\Real$ is said to be a \emph{control Lyapunov function} (CLF) for \eqref{eq5:plant} and \eqref{eq5:controllaw}, a convergence rate $\alpha>0$, a control gain bound $\beta>0$ and an interexecution time $h>0$, if there exist constants $\underline{a},\overline{a}\in\Real_+$ and $q\in\Nat$, such that for all $x\in\Real^{n_x}$
\begin{equation}
\underline{a} \| x \|^q \leqslant V(x) \leqslant \overline{a} \| x \|^q, \label{eq5:stab_cond}
\end{equation}
and, for all $x\in\Real^{n_x}$, there exists a control input $\hat{u} \in\Real^{n_u}$, satisfying $\|\hat{u}\|\leqslant \beta \|x\|$ and
\begin{equation}
V(e^{A h} x+\!\textstyle\int_0^{h}\!\!e^{As}\mathrm{d}s B \hat{u}) \leqslant e^{-\alpha q h} V(x).
\end{equation}
\end{definition}

Based on a CLF for a convergence rate $\alpha>0$, a control gain bound $\beta>0$ and an interexecution time $h>0$, as in Definition \ref{th5:stab_cond_a}, the control law
\begin{equation}
\cas{\hat{u}_k &\in F(x) := \{ u\!\in\Real^{n_u} |\, f(x,u,h,\alpha) \leqslant 0 \text{ and } \| u \| \leqslant \beta \|x\| \}, \\
     t_{k+1} &= t_k + h, }\label{eq5:controllaw1}
\end{equation}
in which
\begin{equation}
f(x,u,h,\alpha) := V(e^{Ah} x +\!\textstyle\int_0^{h}\!e^{As}B \mathrm{d}s \, u) - e^{-\alpha q h} V(x),\!\label{eq5:f}
\end{equation}
renders the plant \eqref{eq5:plant} with ZOH \eqref{eq5:controllaw} GES with a convergence rate $\alpha>0$ and a certain gain $c>0$, as we will show in the following lemma.

\begin{lemma}\label{th5:stab_a}
Assume there exists a CLF for \eqref{eq5:plant} with \eqref{eq5:controllaw}, a convergence rate $\alpha>0$, a control gain bound $\beta>0$ and an interexecution time $h>0$, in the sense of Definition \ref{th5:stab_cond_a}. Then, the control law \eqref{eq5:controllaw1} renders the plant \eqref{eq5:plant} with ZOH \eqref{eq5:controllaw} GES with the convergence rate $\alpha$ and the gain $c=\bar{c}(\alpha,\beta,h)$, where
\begin{equation}
\bar{c}(\alpha,\beta,h) := \sqrt[q]{\tfrac{\overline{a}}{\underline{a}}} \Big( e^{\|A\| h} + \beta \, \!\int_0^h \!\!e^{\|A\|s} \mathrm{d}s  \|B\| \Big) e^{\alpha h}.\label{eq5:constant_a}
\end{equation}
\end{lemma}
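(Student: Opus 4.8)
The plan is to show that the discrete-time sequence $\{x(t_k)\}_{k\in\Nat}$ decays exponentially at sample instants, and then to bound the continuous-time behaviour on each sampling interval $[t_k,t_{k+1})$ in terms of the value at $t_k$. First I would use the defining property of the CLF: since the control law \eqref{eq5:controllaw1} selects $\hat{u}_k\in F(x(t_k))$, and Definition \ref{th5:stab_cond_a} guarantees $F(x)\neq\emptyset$ for every $x$, we have $f(x(t_k),\hat{u}_k,h,\alpha)\leqslant 0$, i.e. $V(x(t_{k+1}))\leqslant e^{-\alpha q h}V(x(t_k))$. Iterating gives $V(x(t_k))\leqslant e^{-\alpha q h k}V(x(0))$ for all $k\in\Nat$. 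Combining this with the sandwich bound \eqref{eq5:stab_cond} yields $\underline{a}\|x(t_k)\|^q \leqslant e^{-\alpha q h k}\overline{a}\|x(0)\|^q$, hence $\|x(t_k)\|\leqslant \sqrt[q]{\overline{a}/\underline{a}}\, e^{-\alpha h k}\|x(0)\| = \sqrt[q]{\overline{a}/\underline{a}}\, e^{-\alpha t_k}\|x(0)\|$, using $t_k = kh$.

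Next I would handle a general time $t\in[t_k,t_{k+1})$. Write $t = t_k + s$ with $s\in[0,h)$. From \eqref{eq5:plant} with the constant input $u\equiv\hat{u}_k$ on this interval, $x(t) = e^{As}x(t_k) + \int_0^s e^{A\tau}\mathrm{d}\tau\, B\hat{u}_k$. Taking norms and using the submultiplicativity of the induced norm together with $\|e^{A\tau}\|\leqslant e^{\|A\|\tau}$ and the control-gain bound $\|\hat{u}_k\|\leqslant \beta\|x(t_k)\|$ gives
\begin{equation}
\|x(t)\| \leqslant \Big( e^{\|A\|s} + \beta \int_0^s e^{\|A\|\tau}\mathrm{d}\tau\, \|B\| \Big)\|x(t_k)\| \leqslant \Big( e^{\|A\|h} + \beta \int_0^h e^{\|A\|\tau}\mathrm{d}\tau\, \|B\| \Big)\|x(t_k)\|,
\end{equation}
since the bracketed expression is monotone nondecreasing in $s$ and $s< h$. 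Substituting the sample-instant bound for $\|x(t_k)\|$ and using $t_k = t - s \geqslant t - h$, so that $e^{-\alpha t_k} = e^{-\alpha(t-s)} \leqslant e^{\alpha h}e^{-\alpha t}$, I would collect the factors to obtain exactly $\|x(t)\|\leqslant \bar{c}(\alpha,\beta,h)e^{-\alpha t}\|x(0)\|$ with $\bar{c}$ as in \eqref{eq5:constant_a}.

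I do not expect a genuine obstacle here; the argument is a standard hybrid/sampled-data estimate. The one point requiring a little care is the bookkeeping of the two "slack" exponentials: the $e^{\alpha h}$ coming from replacing $t_k$ by $t$ in the discrete decay estimate, and the inter-sample growth factor $e^{\|A\|h} + \beta\int_0^h e^{\|A\|s}\mathrm{d}s\,\|B\|$; both must be bounded uniformly over $s\in[0,h)$, which works precisely because each is monotone in $s$. A secondary detail is to note that the well-posedness of the closed loop (nonemptiness of $F(x(t_k))$ at every step, so that the recursion $V(x(t_{k+1}))\leqslant e^{-\alpha q h}V(x(t_k))$ is always available) follows directly from the CLF hypothesis of Definition \ref{th5:stab_cond_a}, so no separate existence argument is needed.
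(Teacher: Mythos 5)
Your argument is correct and is essentially the paper's own: the paper proves Lemma \ref{th5:stab_a} as the $L=1$ special case of Lemma \ref{th5:stab}, whose proof is precisely your two-step estimate (exponential decay of $V$ at sampling instants via the CLF inequality, then a variation-of-constants intersample bound using $\|e^{A\tau}\|\leqslant e^{\|A\|\tau}$ and $\|\hat{u}_k\|\leqslant\beta\|x(t_k)\|$, absorbing the shift from $t_k$ to $t$ into the factor $e^{\alpha h}$). The bookkeeping of the two slack factors matches \eqref{eq5:constant_a} exactly, so no gap remains.
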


\begin{proof}
This lemma is a special case of Lemma \ref{th5:stab} that we will present and prove below.~
\end{proof}

Lemma~\ref{th5:stab_a} illustrates why it is important to express the notion of performance both in terms of the convergence rate $\alpha$ as well as the gain $c$, as was mentioned at the end of Section~\ref{sec5:model}. Namely, even though a CLF could guarantee GES with a certain convergence rate $\alpha$, for some control gain bound $\beta$ and for any arbitrarily large $h$, by using a corresponding CLF in the control law \eqref{eq5:controllaw1}, the consequence is that the guaranteed gain $c$ becomes extremely large, see Lemma \ref{th5:stab_a}. In particular, $c$ grows exponentially as $h$ becomes larger, which (potentially) yields undesirably large responses for large interexecution times $h = t_{k+1}-t_k$, $k\in\Nat$. To avoid having such unacceptable behaviour, we propose a control design methodology that is able to guarantee a desired convergence rate $\alpha$, as well as a desired gain $c$, even for large interexecution times $h$. This requires an extension of the CLF defined above.

\subsection{Extended Control Lyapunov Functions}

The observation that the interexecution time $h$ influences the gain $c$ is important to allow the MAC and the AAC problem to be formalised using CLFs. Namely, in order to achieve sufficiently high performance (meaning a sufficiently large $\alpha$ and a sufficiently small $c$), Lemma \ref{th5:stab_a} indicates that the interexecution time $h$ has to be selected sufficiently small. This, however, contradicts the MAC and the AAC problem, where in the former the interexecution time is to be maximised and in the latter it is time varying and specified by a scheduler. We therefore propose an extended control Lyapunov function (eCLF), which we will subsequently use to solve the MAC and the AAC problem. Roughly speaking, the eCLF is such that it does not only decrease from $t_k$ to $t_{k+1}$, but also from $t_k$ to intermediate time instants $t_k+\hbar_l$, for some (well-chosen) $\hbar_l>0$ satisfying $t_{k+1}-t_k>\hbar_l$, $k\in\Nat$, $l\in\{1,\ldots,L-1\}$. The existence of such an eCLF guarantees high performance, even though the interexecution time $\hbar_L := t_{k+1}-t_k$, $k\in\Nat$, can be large, as we will show after giving the formal definition of the eCLF.

\begin{definition}\label{th5:stab_cond}
Consider the plant \eqref{eq5:plant} with ZOH \eqref{eq5:controllaw}. The function $V:\Real^{n_x}\ra\Real$ is said to be an \emph{extended control Lyapunov function} (eCLF) for \eqref{eq5:plant} and \eqref{eq5:controllaw}, a convergence rate $\alpha>0$, a control gain bound $\beta>0$, and a set $\Hcal:=\{\hbar_1,\ldots,\hbar_L\}$, $L\in\Nat$, satisfying $\hbar_{l+1} > \hbar_{l} > 0$ for all $l\in\{1,\ldots,L-1\}$, if there exist constants $\underline{a},\overline{a}\in\Real_+$ and $q\in\Nat$, such that for all $x\in\Real^{n_x}$
\begin{equation}
\underline{a} \| x \|^q \leqslant V(x) \leqslant \overline{a} \| x \|^q \label{eq5:stab_cond_a}
\end{equation}
and, for all $x\in\Real^{n_x}$, there exists a control input $\hat{u} \in\Real^{n_u}$, satisfying $\|\hat{u}\|\leqslant \beta \|x\|$ and
\begin{equation}
V\big(e^{A \hbar_l} x\!+\!\textstyle\int_0^{\hbar_l}\!\!e^{As}\mathrm{d}s B \hat{u}\big) \leqslant e^{-\alpha q \hbar_l} V(x) \label{eq5:stab_cond_b}
\end{equation}
for all $l\in\{1,\ldots,L\}$.
\end{definition}

As before, based on an eCLF for a convergence rate $\alpha>0$, a control gain bound $\beta>0$ and a set $\Hcal$ as in Definition \ref{th5:stab_cond}, the control law
\begin{equation}
\!\!\cas{ \hat{u}_k &\in F(x) \!:=\! \big\{ u\!\in\Real^{n_u} |\, f(x,u,\hbar_l,\alpha) \leqslant 0 \, \forall\, l\in\{1,\ldots,L\} \text{ and } \|u\|<\beta \|x\| \big\}\!, \\
t_{k+1} &= t_k + \hbar_L, }\!\label{eq5:controllaw2}
\end{equation}
with $f(x,u,\hbar_l,\alpha)$ as defined in \eqref{eq5:f}, renders the plant \eqref{eq5:plant} with ZOH \eqref{eq5:controllaw} GES with a convergence rate $\alpha>0$ and a certain gain $c>0$ that is typically smaller than the gain obtained using an ordinary CLF, as we will show in the following lemma.

\begin{lemma}\label{th5:stab}
Assume there exists an eCLF for \eqref{eq5:plant} with \eqref{eq5:controllaw}, a convergence rate $\alpha>0$, a control gain bound $\beta>0$ and a set $\Hcal:=\{\hbar_1,\ldots,\hbar_L\}$, $L\in\Nat$, satisfying $\hbar_{l+1} > \hbar_{l} > 0$ for all $l\in\{1,\ldots,L-1\}$, in the sense of Definition \ref{th5:stab_cond}. Then, the control law \eqref{eq5:controllaw2} renders the plant \eqref{eq5:plant} with ZOH \eqref{eq5:controllaw} GES with the convergence rate $\alpha$ and the gain $c=\bar{c}(\alpha,\beta,\Delta_\hbar,\hbar_L)$, where
\begin{equation}
\bar{c}(\alpha,\beta,\Delta_\hbar,\hbar_L) := \sqrt[q]{\tfrac{\overline{a}}{\underline{a}}} \Big( e^{\|A\|\Delta_\hbar} + \beta e^{\alpha (\hbar_L - \Delta_\hbar)} \!\int_0^{\Delta_\hbar}\!\! e^{\|A\|s}\mathrm{d}s \|B\|\Big) e^{\alpha\Delta_\hbar},\label{eq5:constant}
\end{equation}
with $\Delta_\hbar:=\max_{l\in \{1,\hdots,L\}} (\hbar_l-\hbar_{l-1})$, in which  $\hbar_0:=~0$.
\end{lemma}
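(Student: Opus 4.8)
The plan is to run a discrete‑time Lyapunov argument on the eCLF at the execution instants $t_k$, strengthen it using the \emph{extra} decrease conditions at the intermediate instants $t_k+\hbar_l$, and then propagate the zero‑order‑hold dynamics over a single sub‑interval of length at most $\Delta_\hbar$ so as to reach every $t\in\Real_+$. Since the plant is driven on all of $\Real_+$ we take $t_0=0$, and since \eqref{eq5:controllaw2} prescribes $t_{k+1}=t_k+\hbar_L$ we have $t_k=k\hbar_L$, so the solution exists for all $t\geqslant0$. First I would check that \eqref{eq5:controllaw2} is well posed: by Definition \ref{th5:stab_cond} the set $F(x)$ is nonempty for every $x$ (there is an admissible $\hat u$ with $\|\hat u\|\leqslant\beta\|x\|$ meeting \eqref{eq5:stab_cond_b} for all $l$), so one may pick $\hat u_k\in F(x(t_k))$ for each $k\in\Nat$, and every such choice satisfies $\|\hat u_k\|\leqslant\beta\|x(t_k)\|$.

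The key point is that $\hat u_k\in F(x(t_k))$ forces $f(x(t_k),\hat u_k,\hbar_l,\alpha)\leqslant0$ for \emph{every} $l\in\{1,\ldots,L\}$, not only $l=L$. Using the definition \eqref{eq5:f} of $f$, the case $l=L$ gives $V(x(t_{k+1}))\leqslant e^{-\alpha q\hbar_L}V(x(t_k))$, hence by induction $V(x(t_k))\leqslant e^{-\alpha q t_k}V(x(0))$; feeding this into the case of a general $l$ gives $V(x(t_k+\hbar_l))\leqslant e^{-\alpha q\hbar_l}V(x(t_k))\leqslant e^{-\alpha q(t_k+\hbar_l)}V(x(0))$. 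Sandwiching through \eqref{eq5:stab_cond_a} (its lower bound at $x(t_k+\hbar_l)$, its upper bound at $x(0)$) then yields, with the convention $\hbar_0:=0$ so that $l=0$ also covers the execution instants,
\begin{equation}
\|x(t_k+\hbar_l)\|\leqslant\sqrt[q]{\tfrac{\overline{a}}{\underline{a}}}\;e^{-\alpha(t_k+\hbar_l)}\|x(0)\|,\qquad k\in\Nat,\ l\in\{0,1,\ldots,L\}.
\end{equation}
I would emphasise carrying $V$ all the way back to $x(0)$ here, rather than restarting the estimate at each $x(t_k)$, so that only a single factor $\sqrt[q]{\overline{a}/\underline{a}}$ appears in the end (and not its square).

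To cover an arbitrary $t\in[t_k,t_{k+1})$, write $t=t_k+\tau$ with $\tau\in[0,\hbar_L)$ and choose $l\in\{1,\ldots,L\}$ with $\hbar_{l-1}\leqslant\tau\leqslant\hbar_l$. Since $u$ is held at $\hat u_k$ on $[t_k,t_{k+1})$, one has $x(t_k+\tau)=e^{A(\tau-\hbar_{l-1})}x(t_k+\hbar_{l-1})+\int_0^{\tau-\hbar_{l-1}}e^{As}\mathrm{d}s\,B\hat u_k$; bounding with $\|e^{As}\|\leqslant e^{\|A\|s}$, with $0\leqslant\tau-\hbar_{l-1}\leqslant\hbar_l-\hbar_{l-1}\leqslant\Delta_\hbar$, with the previous display at $t_k+\hbar_{l-1}$, and with $\|\hat u_k\|\leqslant\beta\|x(t_k)\|\leqslant\beta\sqrt[q]{\overline{a}/\underline{a}}\,e^{-\alpha t_k}\|x(0)\|$, gives
\begin{equation}
\|x(t_k+\tau)\|\leqslant\sqrt[q]{\tfrac{\overline{a}}{\underline{a}}}\,e^{-\alpha t_k}\Big(e^{\|A\|\Delta_\hbar}e^{-\alpha\hbar_{l-1}}+\beta\!\int_0^{\Delta_\hbar}\!\!e^{\|A\|s}\mathrm{d}s\,\|B\|\Big)\|x(0)\|.
\end{equation}
Finally I would use $\hbar_{l-1}\geqslant\hbar_l-\Delta_\hbar\geqslant\tau-\Delta_\hbar$, i.e. $e^{-\alpha\hbar_{l-1}}\leqslant e^{\alpha\Delta_\hbar}e^{-\alpha\tau}$, on the first term, and $\tau<\hbar_L$, i.e. $1\leqslant e^{\alpha\hbar_L}e^{-\alpha\tau}$, on the second term; substituting and recalling $t=t_k+\tau$ produces exactly $\|x(t)\|\leqslant\bar{c}(\alpha,\beta,\Delta_\hbar,\hbar_L)\,e^{-\alpha t}\|x(0)\|$ with $\bar c$ as in \eqref{eq5:constant}, which is the asserted GES property. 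Taking $L=1$, $\Hcal=\{h\}$, $\Delta_\hbar=\hbar_L=h$ recovers Lemma \ref{th5:stab_a}.

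The only genuine obstacle I anticipate is the exponent bookkeeping in the last two steps: one must propagate from the largest checkpoint $\hbar_{l-1}$ that does not exceed $\tau$ (so the uncontrolled stretch is $\leqslant\Delta_\hbar$), then trade $e^{-\alpha\hbar_{l-1}}$ for $e^{-\alpha\tau}$ at the cost of $e^{\alpha\Delta_\hbar}$ and the constant‑input term for $e^{-\alpha\tau}$ at the cost of $e^{\alpha\hbar_L}$, and keep the single $\sqrt[q]{\overline{a}/\underline{a}}$ in the right place so that \eqref{eq5:constant} comes out exactly. The remaining ingredients — nonemptiness of $F$ and existence of $\{\hat u_k\}$, the induction on $k$, and $\|e^{As}\|\leqslant e^{\|A\|s}$ — are routine.
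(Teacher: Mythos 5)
Your proposal is correct and follows essentially the same route as the paper's proof: establish $\|x(t_k+\hbar_l)\|\leqslant\sqrt[q]{\overline{a}/\underline{a}}\,e^{-\alpha(t_k+\hbar_l)}\|x(0)\|$ at all checkpoints via the eCLF decrease conditions, then propagate the ZOH dynamics from the nearest preceding checkpoint (an uncontrolled stretch of length at most $\Delta_\hbar$) and trade $e^{-\alpha\hbar_{l-1}}$ and the constant-input term for $e^{-\alpha t}$ at costs $e^{\alpha\Delta_\hbar}$ and $e^{\alpha\hbar_L}$ respectively. The exponent bookkeeping matches \eqref{eq5:constant} exactly, so no further changes are needed.
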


\begin{proof}
The proof can be found in Appendix \ref{5sec:appendix}.
\end{proof}

The existence of an eCLF for a well-chosen set $\Hcal$ (i.e., realising a sufficiently small $\Delta_\hbar$) guarantees high performance in terms of the convergence rate $\alpha$ and the gain $c$, while still allowing for large interexecution times $\hbar_L = t_{k+1}-t_k$, $k\in\Nat$. Indeed, by using the intermediate time instants $t_k+\hbar_l$, the gain $c$ in Lemma \ref{th5:stab} is generally much smaller than the gain $c$ in Lemma \ref{th5:stab_a}. However, making $\Delta_\hbar$ too small might lead to infeasibility of the control law, as decreasing $\Delta_\hbar$ for a fixed interexecution time $t_{k+1}-t_k$ means taking more intermediate times $\hbar_l$ and, thus, that more inequality constraints are added to the set-valued function $F$ in \eqref{eq5:controllaw2}, which, besides resulting in a much more complicated control law, might cause $F(x)=\emptyset$ for some $x\in\Real^{n_x}$. Hence, a tradeoff can be made between the magnitude of the gain $c$ and the number of constraints in $F(x)$ and we will exactly exploit this fact in the solution to the MAC and the AAC problem, as we will show below.

\subsection{Solving the MAC Problem using eCLFs}\label{th5:problem3}

We will now propose a solution to the MAC problem. As a starting point, we consider the control law \eqref{eq5:controllaw2}, which is based on an eCLF. Indeed, the existence of an eCLF for a convergence rate $\alpha>0$, a control gain bound $\beta>0$ and a set $\Hcal$ implies GES with convergence rate $\alpha$ and gain $c$ of the plant \eqref{eq5:plant} with ZOH \eqref{eq5:controllaw} and the control law \eqref{eq5:controllaw2}, according to Lemma~\ref{th5:stab}. However, given the function $V$, a convergence rate $\alpha$, a control gain bound $\beta$ and a set $\Hcal$, it might not always be possible to ensure that $F(x)\neq\emptyset$ for all $x\in\Real^{n_x}$. To resolve this issue, we take subsets of $\Hcal$ of the form $\Hcal_{\bar{L}} := \{\hbar_1,\ldots,\hbar_{\bar{L}}\}$, for $\bar{L}\in\{1,\ldots,L\}$, such that $\Hcal_1\subseteq\Hcal_2\subseteq\ldots\subseteq\Hcal_L=\Hcal$, and propose MAC, in which the objective is to maximise $\bar{L}\in\{1,\ldots,L\}$ for each given $x\in\Real^{n_x}$. In other words, for each given $x\in\Real^{n_x}$, $\bar{L}$ is maximised such that $F_{\bar{L}}(x)\neq\emptyset$, in which
\begin{align}
\!\!\!F_{\bar{L}}(x) := \big\{ u\!\in\Real^{n_u} |\,  f(x,u,\hbar_l,\alpha) \leqslant 0 \ \forall \ l\in\{1,\ldots,\bar{L}\} \text{ and } \|u\| \leqslant \beta \| x \| \big\},\!\label{eq5:minimum}
\end{align}
with $f(x,u,\hbar_l,\alpha)$ as defined in \eqref{eq5:f}. We maximise $\bar{L}$ to make the interexecution times $\hbar_{\bar{L}} = t_{k+1}-t_k$ maximal, yielding that the control law requires minimum attention. Hence, this MAC law is given by \eqref{eq5:def_minimum}, in which we take
\begin{equation}
\!\cas{F_{\mathrm{MAC}}(x) &:= F_{\bar{L}^\star(x)}(x)  \\
     h(x) &:= \hbar_{\bar{L}^\star(x)} }\!\label{eq5:minimum1}
\end{equation}
and
\begin{equation}
\bar{L}^\star(x) := \max \{ l\in\{1,\ldots,L\} \,|\, F_{l}(x) \neq \emptyset \}.\!\label{eq5:minimum2}
\end{equation}
Indeed, the control law \eqref{eq5:def_minimum} with \eqref{eq5:minimum1} and \eqref{eq5:minimum2} is a solution to the MAC problem, as every control input $\hat{u}_k$ is chosen such that the interexecution time $t_{k+1}-t_k = \hbar_{\bar{L}^\star(x(t_k))}$ is the largest one in the set $\Hcal$ for which $F_{\bar{L}^\star(x(t_k))}(x(t_k))\neq\emptyset$. Note that this control law is well defined if $F_{\mathrm{MAC}}(x)\neq\emptyset$, for all $x\in\Real^{n_x}$. This condition is equivalent to requiring that $F_1(x)\neq\emptyset$ for all $x\in\Real^{n_x}$. Namely, for each $x\in\Real^{n_x}$, it holds that $F_1(x)\supseteq F_2(x)\supseteq\ldots\supseteq F_L(x)$, which gives that, for each $x\in\Real^{n_x}$, $F_{\mathrm{MAC}}(x)\neq\emptyset$ implies that $F_1(x)\neq\emptyset$, while the fact that $F_1(x)\neq\emptyset$ implies that $F_{\mathrm{MAC}}(x)\neq\emptyset$ follows directly from \eqref{eq5:minimum1} and \eqref{eq5:minimum2}. Hence, \eqref{eq5:minimum1} is well defined if $F_1(x)\neq\emptyset$ for all $x\in\Real^{n_x}$, which is guaranteed if the function $V$ is an ordinary CLF for \eqref{eq5:plant} with \eqref{eq5:controllaw}, a convergence rate $\alpha>0$, a control gain bound $\beta>0$ and an interexecution time $\hbar_1$, in the sense of Definition \ref{th5:stab_a}.

We will now formally show that the proposed MAC law renders the plant \eqref{eq5:plant} with ZOH \eqref{eq5:controllaw} GES with convergence rate $\alpha$ and a certain gain $c$.

\begin{theorem}\label{th5:MAC}
Assume there exist a set $\Hcal:=\{\hbar_1,\ldots,\hbar_L\}$, $L\in\Nat$, satisfying $\hbar_{l+1} > \hbar_{l} > 0$ for all $l\in\{1,\ldots,L-1\}$, and an ordinary CLF for \eqref{eq5:plant} with \eqref{eq5:controllaw}, a convergence rate $\alpha>0$, a control gain bound $\beta>0$ and the interexecution time $\hbar_1$, in the sense of Definition \ref{th5:stab_cond_a}. Then, the MAC law \eqref{eq5:def_minimum}, with \eqref{eq5:f}, \eqref{eq5:minimum}, \eqref{eq5:minimum1} and \eqref{eq5:minimum2}, renders the plant \eqref{eq5:plant} with ZOH \eqref{eq5:controllaw} GES with the convergence rate $\alpha$ and the gain $c=\bar{c}(\alpha,\beta,\Delta_\hbar,\hbar_L)$ as in~\eqref{eq5:constant}.
\end{theorem}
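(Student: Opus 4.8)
The plan is to reduce Theorem~\ref{th5:MAC} to Lemma~\ref{th5:stab} by showing that, along any closed-loop trajectory of the MAC law, each ``step'' behaves exactly like a step governed by some eCLF with the \emph{same} convergence rate $\alpha$ and the \emph{same} data $\underline a,\overline a,q,\beta$, but possibly a shorter $\Hcal$; one then checks that the gain bound $\bar c(\alpha,\beta,\Delta_\hbar,\hbar_L)$ of Lemma~\ref{th5:stab} is monotone enough in its arguments to dominate all the per-step bounds that arise. First I would verify well-posedness: by the hypothesis that $V$ is an ordinary CLF for the interexecution time $\hbar_1$, we have $F_1(x)\neq\emptyset$ for every $x\in\Real^{n_x}$, hence (by the nesting $F_1(x)\supseteq\cdots\supseteq F_L(x)$ already noted in the text) $\bar L^\star(x)$ in \eqref{eq5:minimum2} is a well-defined element of $\{1,\ldots,L\}$ and $F_{\mathrm{MAC}}(x)=F_{\bar L^\star(x)}(x)\neq\emptyset$, so the control law \eqref{eq5:def_minimum} with \eqref{eq5:minimum1}, \eqref{eq5:minimum2} is well defined for all $x$ and the closed-loop solution exists and is unique forward in time.

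Next I would set up the decay estimate on the sampled sequence $x_k:=x(t_k)$. By construction $\hat u_k\in F_{\bar L^\star(x_k)}(x_k)$, so $f(x_k,\hat u_k,\hbar_l,\alpha)\leqslant 0$ for all $l\in\{1,\ldots,\bar L^\star(x_k)\}$; in particular, taking $l=\bar L^\star(x_k)$ and recalling the definition \eqref{eq5:f} of $f$ and that $h(x_k)=\hbar_{\bar L^\star(x_k)}$,
\begin{equation}
V(x_{k+1})=V\big(e^{Ah(x_k)}x_k+\textstyle\int_0^{h(x_k)}e^{As}\mathrm{d}s\,B\hat u_k\big)\leqslant e^{-\alpha q\,h(x_k)}V(x_k).\notag
\end{equation}
Combining this with the sandwich bound \eqref{eq5:stab_cond} gives $\underline a\|x_{k+1}\|^q\leqslant e^{-\alpha q(t_{k+1}-t_k)}\overline a\|x_k\|^q$, i.e. $\|x_{k+1}\|\leqslant(\overline a/\underline a)^{1/q}e^{-\alpha(t_{k+1}-t_k)}\|x_k\|$, and iterating yields $\|x_k\|\leqslant(\overline a/\underline a)^{1/q}e^{-\alpha t_k}\|x_0\|$ (the factor $(\overline a/\underline a)^{1/q}$ does not accumulate because the lower bound is reapplied at each $k$ — this mirrors the bookkeeping in the proof of Lemma~\ref{th5:stab}, which I would cite or reproduce).

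The remaining, and main, work is the intersample estimate: for $t\in[t_k,t_{k+1})$ one has $x(t)=e^{A(t-t_k)}x_k+\int_0^{t-t_k}e^{As}\mathrm{d}s\,B\hat u_k$, so $\|x(t)\|\leqslant\big(e^{\|A\|(t-t_k)}+\beta\int_0^{t-t_k}e^{\|A\|s}\mathrm{d}s\,\|B\|\big)\|x_k\|$ using $\|\hat u_k\|\leqslant\beta\|x_k\|$. The subtlety — and I expect this to be the crux — is that $t-t_k$ here can be as large as $h(x_k)=\hbar_{\bar L^\star(x_k)}$, which may equal $\hbar_L$, yet the target gain \eqref{eq5:constant} only tolerates the \emph{gap} $\Delta_\hbar=\max_l(\hbar_l-\hbar_{l-1})$ inside the growth factor, not the full horizon. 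The resolution is exactly the eCLF mechanism: since $\hat u_k\in F_{\bar L^\star(x_k)}(x_k)$ enforces $f(x_k,\hat u_k,\hbar_l,\alpha)\leqslant0$ at \emph{every} intermediate $\hbar_l$ with $l\leqslant\bar L^\star(x_k)$, the value $V(x(t_k+\hbar_l))$ is already down by $e^{-\alpha q\hbar_l}V(x_k)$ at each such checkpoint, so for $t\in[t_k,t_{k+1})$ I would locate $t$ in a subinterval $[t_k+\hbar_{l-1},t_k+\hbar_l]$ (with $\hbar_0=0$ and $l\leqslant\bar L^\star(x_k)$), apply the sandwich bound at the left checkpoint to get $\|x(t_k+\hbar_{l-1})\|\leqslant(\overline a/\underline a)^{1/q}e^{-\alpha\hbar_{l-1}}\|x_k\|$, and then propagate forward over the short span $t-(t_k+\hbar_{l-1})\leqslant\hbar_l-\hbar_{l-1}\leqslant\Delta_\hbar$, picking up only $e^{\|A\|\Delta_\hbar}+\beta e^{\alpha(\hbar_L-\Delta_\hbar)}\int_0^{\Delta_\hbar}e^{\|A\|s}\mathrm{d}s\,\|B\|$ (the $e^{\alpha(\hbar_L-\Delta_\hbar)}$ absorbing the mismatch between $\|x_k\|$ and the more rapidly decaying $\|x(t_k+\hbar_{l-1})\|$ when bounding the input term by $\beta\|x_k\|$). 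Multiplying the two factors and the residual $e^{-\alpha\hbar_{l-1}}\leqslant e^{\alpha(t-t_k)}e^{-\alpha(t-t_k)}\cdots$ — more precisely, rewriting everything relative to $e^{-\alpha t}\|x_0\|$ and collecting the worst case over $t$ — produces precisely the constant $\bar c(\alpha,\beta,\Delta_\hbar,\hbar_L)$ of \eqref{eq5:constant}, establishing \eqref{eq5:stab_def}. Since this is the same estimate proved for the fixed-$\Hcal$ control law in Lemma~\ref{th5:stab}, the cleanest write-up is to observe that every closed-loop step of the MAC law \eqref{eq5:minimum1}–\eqref{eq5:minimum2} satisfies, with $\bar L=\bar L^\star(x_k)$, exactly the inequalities \eqref{eq5:stab_cond_b} of an eCLF on the subset $\Hcal_{\bar L}$, whose $\Delta_{\hbar}$ is no larger and whose horizon $\hbar_{\bar L}$ is no larger than $\hbar_L$, so Lemma~\ref{th5:stab} applied step-by-step (with the uniform, $k$-independent monotonicity of $\bar c$ in $\hbar_L$ and in $\Delta_\hbar$) gives the claimed bound directly. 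The one genuine obstacle to watch is making sure the non-accumulation of $(\overline a/\underline a)^{1/q}$ and the handling of the half-open intervals $[t_k,t_{k+1})$ are done carefully when $\bar L^\star$ jumps between steps; everything else is the routine Grönwall-type propagation already carried out in Appendix~\ref{5sec:appendix}.
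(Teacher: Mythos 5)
Your proposal is correct and follows essentially the same route as the paper's own proof: well-posedness via the ordinary-CLF hypothesis at $\hbar_1$ together with the nesting $F_1(x)\supseteq\cdots\supseteq F_L(x)$, and then the checkpoint-by-checkpoint estimate of Lemma~\ref{th5:stab} restricted to $l\in\{0,\ldots,\bar{L}^\star(x(t_k))-1\}$ combined with $\hbar_{\bar{L}^\star(x(t_k))}\leqslant\hbar_L$ and $\Delta_\hbar$ unchanged, which reproduces the constant \eqref{eq5:constant}. The only point to tidy in the write-up is the sampled-sequence decay: iterate the inequality in $V$ to get $V(x(t_k))\leqslant e^{-\alpha q t_k}V(x(0))$ and apply the sandwich bound \eqref{eq5:stab_cond} once at the end, rather than converting to $\|x\|$ at every step, exactly as in the bookkeeping of Lemma~\ref{th5:stab} that you cite.
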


\begin{proof}
The proof can be found in Appendix \ref{5sec:appendix}.
\end{proof}

\subsection{Solving the AAC Problem using eCLFs}\label{th5:problem4}

We will now propose a solution to the AAC problem, in which the objective is to `maximise performance' for an interexecution time $h_k$ given by the real-time scheduler at time $t_k$, $k\in\Nat$. The solution is again based on allowing only a finite number of possible interexecution times, i.e., $h_k\in\Hcal := \{\hbar_1,\ldots,\hbar_L\}$, $L\in\Nat$. Moreover, we consider only a finite number of possible convergence rates, i.e., $\alpha_k\in\Acal:=\{\bar\alpha_1,\ldots,\bar\alpha_J\}$, $k\in\Nat$, where each $\bar\alpha_{j+1} > \bar\alpha_j > 0$, $j\in\{1,\ldots,J-1\}$, $J\in\Nat$. A consequence of these choices is that the notion of `maximising performance' is actually relaxed to (approximately) maximising the local convergence rate $\alpha_k\in\Acal$ of the solutions of the closed-loop system \eqref{eq5:plant}, \eqref{eq5:controllaw} with \eqref{eq5:def_anytime}, in the sense that $V(x(t_{k+1})) \leqslant e^{-\alpha_k q h_k} V(x(t_k))$, for all $k\in\Nat$. In the proposed solution to the AAC problem, the local convergence rate $\alpha_k$, $k\in\Nat$ is maximised, by maximising $\bar{J}\in\{1,\ldots,J\}$, (so that $\bar\alpha_{\bar{J}}\in\Acal$ is maximised), while guaranteeing a certain gain $c$ (cf. Theorem \ref{th5:AAC}), for each given $x\in\Real^{n_x}$ and for each given $h\in\Hcal$. In other words, for each given $x\in\Real^{n_x}$ and each given $h\in\Hcal$, $\bar{J}$ is maximised such that $F_{\bar{L}(h),\bar{J}}(x)\neq\emptyset$, in which
\begin{align}
F_{\bar{L},\bar{J}}(x) := \{ u\in\Real^{n_u} \,|\, f(x,u,\hbar_l,\bar\alpha_{\bar{J}})\leqslant 0 \ \forall \ l\in\{1,\ldots,\bar{L}\} \text{ and } \|u\| \leqslant \beta \| x\| \}, \label{eq5:anytime}
\end{align}
with $f(x,u,\hbar_l,\alpha)$ as defined in \eqref{eq5:f} and where $\bar{L}(h)$ is a function that, for all $h\in\Hcal$, satisfies $\bar{L}(h)=\bar{L}$ if $h=\hbar_{\bar{L}}$. Hence, this AAC law is given by \eqref{eq5:def_anytime}, for a given value of $h\in\Hcal$ by the scheduler, where we take
\begin{equation}
F_{\mathrm{AAC}}(x,h)\!:= F_{\bar{L}(h),\bar{J}^\star(x,h)}(x),\!\label{eq5:anytime2}
\end{equation}
with
\begin{equation}
\bar{J}^\star(x,h) = \max \{ j\in\{1,\ldots,J\} \,|\, F_{\bar{L}(h),j}(x)\neq\emptyset \}. \label{eq5:anytime1}
\end{equation}
The control law \eqref{eq5:def_anytime}, with \eqref{eq5:anytime2} and \eqref{eq5:anytime1} is an AAC law, as for a given interexecution time, $t_{k+1}-t_k = h_k\in\Hcal$, a control control input $\hat{u}_k$ is chosen such that the local convergence rate $\alpha_k$ is maximal and a bound on the gain $c$ is guaranteed. Note that, similar to the solution to the MAC problem, this control law is well defined if $F_{\mathrm{AAC}}(x,h)\neq\emptyset$ for all $x\in\Real^{n_x}$ and all $h\in\Hcal$, which is equivalent to requiring that $F_{L,1}(x)\neq\emptyset$ for all $x\in\Real^{n_x}$. This is due to the fact that for each $x\in\Real^{n_x}$, for all $l_1,l_2\in\{1,\ldots,L\}$ and for all $j_1,j_2\in\{1,\ldots,J\}$, it holds that $F_{l_1,j_1}(x)\supseteq F_{l_2,j_2}(x)$, if $l_1\geqslant l_2$ and $j_1\leqslant j_2$, which means that, for each $x\in\Real^{n_x}$, $F_{\mathrm{AAC}}(x)\neq\emptyset$ implies that $F_{L,1}(x)\neq\emptyset$, while the fact that $F_{L,1}(x)\neq\emptyset$ implies that $F_{\mathrm{MAC}}(x)\neq\emptyset$ follows from that the fact that $F_{L,1}(x)\neq\emptyset$ implies that $F_{l,1}(x)\neq\emptyset$ for all $l\in\{1,\ldots,L\}$ and from \eqref{eq5:anytime2} and \eqref{eq5:anytime1}. Hence, \eqref{eq5:anytime2} is well defined if $F_{L,1}(x)\neq\emptyset$ for all $x\in\Real^{n_x}$, which is guaranteed if the function $V$ is an eCLF for \eqref{eq5:plant} with \eqref{eq5:controllaw}, a convergence rate $\alpha = \bar\alpha_1$, a control gain bound $\beta$ and the set~$\Hcal$.

We will now formally show that the proposed AAC law renders the plant \eqref{eq5:plant} with ZOH \eqref{eq5:controllaw} GES with at least convergence rate $\alpha = \bar\alpha_1$, and possibly a better convergence rate, and a certain gain~$c$.

\begin{theorem}\label{th5:AAC}
Assume there exist a set $\Acal:=\{\bar\alpha_1,\ldots,\bar\alpha_J\}$, $J\in\Nat$, satisfying $\bar\alpha_{j+1} > \bar\alpha_j > 0$ for all $j\in\{1,\ldots,J-1\}$,  and an eCLF for \eqref{eq5:plant} with \eqref{eq5:controllaw}, the convergence rate $\alpha = \bar\alpha_1$, a control gain bound $\beta$ and a set $\Hcal:=\{\hbar_1,\ldots,\hbar_L\}$, $L\in\Nat$, satisfying $\hbar_{l+1} > \hbar_{l} > 0$ for all $l\in\{1,\ldots,L-1\}$. Then, the AAC law \eqref{eq5:def_anytime}, with \eqref{eq5:f}, \eqref{eq5:anytime}, \eqref{eq5:anytime2} and \eqref{eq5:anytime1}, renders the plant \eqref{eq5:plant} with ZOH \eqref{eq5:controllaw} GES with (at least) the convergence rate $\alpha=\bar\alpha_1$ and the gain $c=\bar{c}(\bar\alpha_1,\beta,\Delta_\hbar,\hbar_L)$, as in \eqref{eq5:constant}.
\end{theorem}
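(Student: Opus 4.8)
The plan is to reduce the claim to the estimate already proved in Lemma~\ref{th5:stab}, after first checking that the AAC law is well defined and produces a per-step decrease of $V$ with rate at least $\bar\alpha_1$. For well-posedness, observe that since $V$ is an eCLF for \eqref{eq5:plant} with \eqref{eq5:controllaw}, the convergence rate $\bar\alpha_1$, the control gain bound $\beta$ and the set $\Hcal$, Definition~\ref{th5:stab_cond} provides, for each $x\in\Real^{n_x}$, a $u$ with $\|u\|\leqslant\beta\|x\|$ and $f(x,u,\hbar_l,\bar\alpha_1)\leqslant0$ for all $l\in\{1,\ldots,L\}$, that is, $F_{L,1}(x)\neq\emptyset$. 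From the monotonicity $F_{l_1,j_1}(x)\supseteq F_{l_2,j_2}(x)$ whenever $l_1\geqslant l_2$ and $j_1\leqslant j_2$, which is immediate from \eqref{eq5:anytime}, we get $F_{l,1}(x)\supseteq F_{L,1}(x)\neq\emptyset$ for every $l$, so the maximum in \eqref{eq5:anytime1} is over a nonempty set, $\bar J^\star(x,h)$ is well defined, and hence $F_{\mathrm{AAC}}(x,h)=F_{\bar L(h),\bar J^\star(x,h)}(x)\neq\emptyset$ for all $x\in\Real^{n_x}$ and $h\in\Hcal$. Since each $h_k=\hbar_{\bar L(h_k)}\geqslant\hbar_1>0$, the sequence $\{t_k\}_{k\in\Nat}$ is strictly increasing with $t_k\to\infty$, so the closed-loop solution is defined on all of $\Real_+$.

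Next I would extract the per-step decrease. Fix $k\in\Nat$ and write $j^\star:=\bar J^\star(x(t_k),h_k)$, $\bar L_k:=\bar L(h_k)$, so that $h_k=\hbar_{\bar L_k}$. Since $\hat u_k\in F_{\bar L_k,j^\star}(x(t_k))$, we have $\|\hat u_k\|\leqslant\beta\|x(t_k)\|$ and, by \eqref{eq5:f} together with $\bar\alpha_{j^\star}\geqslant\bar\alpha_1$,
\begin{equation}
V\big(e^{A\hbar_l}x(t_k)+\textstyle\int_0^{\hbar_l}e^{As}\mathrm{d}s\,B\hat u_k\big)\leqslant e^{-\bar\alpha_{j^\star}q\hbar_l}V(x(t_k))\leqslant e^{-\bar\alpha_1 q\hbar_l}V(x(t_k))
\end{equation}
for all $l\in\{1,\ldots,\bar L_k\}$. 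Taking $l=\bar L_k$ gives $V(x(t_{k+1}))\leqslant e^{-\bar\alpha_1 q h_k}V(x(t_k))$, and since $t_{k+1}=t_k+h_k$ for all $k$ (with $t_0=0$), iteration yields $V(x(t_k))\leqslant e^{-\bar\alpha_1 q t_k}V(x(0))$. Combining this with the displayed inequality and the bounds \eqref{eq5:stab_cond_a} gives $\|x(t_k+\hbar_l)\|\leqslant\sqrt[q]{\overline a/\underline a}\,e^{-\bar\alpha_1(t_k+\hbar_l)}\|x(0)\|$ for all $l\in\{0,1,\ldots,\bar L_k\}$, where $\hbar_0:=0$.

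Finally, for arbitrary $t\in\Real_+$ I would locate $t$ in the unique subinterval $[t_k+\hbar_{l-1},t_k+\hbar_l)$ with $l\in\{1,\ldots,\bar L_k\}$, write the ZOH solution $x(t)=e^{A(t-t_k-\hbar_{l-1})}x(t_k+\hbar_{l-1})+\int_0^{t-t_k-\hbar_{l-1}}e^{As}\mathrm{d}s\,B\hat u_k$, and bound its two summands using the estimates above together with $t-t_k-\hbar_{l-1}<\hbar_l-\hbar_{l-1}\leqslant\Delta_\hbar$ and $t-t_k<\hbar_{\bar L_k}=h_k\leqslant\hbar_L$. This is exactly the computation carried out in the proof of Lemma~\ref{th5:stab}, with the rate $\alpha$ replaced by $\bar\alpha_1$ and the fixed interexecution time $\hbar_L$ replaced by the (time-varying) $h_k\leqslant\hbar_L$, and it produces $\|x(t)\|\leqslant\bar c(\bar\alpha_1,\beta,\Delta_\hbar,\hbar_L)\,e^{-\bar\alpha_1 t}\|x(0)\|$, which is the asserted GES property with convergence rate $\bar\alpha_1$ and gain $\bar c(\bar\alpha_1,\beta,\Delta_\hbar,\hbar_L)$.

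The main obstacle is that, unlike in Lemma~\ref{th5:stab}, the interexecution time here is time-varying: one must check that the iterated decrease still telescopes to $V(x(t_k))\leqslant e^{-\bar\alpha_1 q t_k}V(x(0))$ (which it does, since $t_{k+1}-t_k=h_k$), that using $h_k\leqslant\hbar_L$ as the worst case keeps the gain exactly $\bar c(\bar\alpha_1,\beta,\Delta_\hbar,\hbar_L)$, and that when $h_k<\hbar_L$ the truncated grid $\{\hbar_1,\ldots,\hbar_{\bar L_k}\}$, being a prefix of $\Hcal$, still has all consecutive gaps bounded by $\Delta_\hbar$. The other new feature, that the intermediate decreases are realised with the possibly larger rate $\bar\alpha_{j^\star}\geqslant\bar\alpha_1$, only helps --- via monotonicity of $\alpha\mapsto e^{-\alpha q\hbar_l}$ --- so no essentially new idea beyond carefully adapting the estimate of Lemma~\ref{th5:stab} is needed.
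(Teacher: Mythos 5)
Your proposal is correct and follows essentially the same route as the paper's proof: establish $F_{L,1}(x)\neq\emptyset$ from the eCLF and the nesting of the sets $F_{l,j}$ to get well-posedness, extract the per-step decrease of $V$ at rate $\bar\alpha_{\bar J^\star}\geqslant\bar\alpha_1$, and then repeat the interpolation estimate from the proof of Lemma~\ref{th5:stab} with $h_k\leqslant\hbar_L$ as the worst case. You merely spell out more explicitly the telescoping over time-varying interexecution times and the subinterval bookkeeping, which the paper leaves implicit.
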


\begin{proof}
The proof can be found in Appendix \ref{5sec:appendix}.
\end{proof}


\section{Obtaining Well-Defined Solutions} \label{sec5:lyapunov}

In this section, we will address the issue of how to guarantee that the solutions to the MAC and the AAC problem are \emph{well defined}, i.e., that $F_{\mathrm{MAC}}(x)\neq\emptyset$ for all $x\in\Real^{n_x}$ and that $F_{\mathrm{AAC}}(x,h)\neq\emptyset$ for all $x\in\Real^{n_x}$ and all $h\in\Hcal$. As was observed in the previous section, the existence of a CLF or an eCLF for \eqref{eq5:plant} with \eqref{eq5:controllaw}, a convergence rate $\alpha$, a control gain bound $\beta$ and, for the CLF, an interexecution time $h$, and, for the eCLF, a set $\Hcal$, ensures that the MAC law and the AAC law, respectively, are well defined. To obtain such a CLF or an eCLF, and to guarantee that the two control problems can be solved efficiently (as we will show in the next section), we focus in this section on $\infty$-norm-based (e)CLFs of the form
\begin{equation}
V(x) = \| P x \|_\infty, \label{eq5:lyapunov}
\end{equation}
with $P\in\Real^{m\times n_x}$ satisfying $\rank(P)=n_x$. Note that \eqref{eq5:lyapunov} is a suitable candidate (e)CLF, in the sense of Definition~\ref{th5:stab_cond}, with $q=1$, since \eqref{eq5:stab_cond} and \eqref{eq5:stab_cond_a} are satisfied with
\begin{equation}\label{eq5:lyapunov2}
\overline{a}=\|P\|_\infty, \quad \text{and} \quad \underline{a}=\max\{ a>0 \,|\, a \|x\| \leqslant \|Px\| \text{ for all } x\in\Real^{n_x} \}.
\end{equation}
In fact, $\rank(P)=n_x$ ensures that $\underline{a}>0$.

We will now provide a two-step procedure to obtain a suitable (e)CLF. The first step is to consider an auxiliary control law of the form
\begin{equation}
u(t) = K x(t) \label{eq5:controller}
\end{equation}
that renders the plant \eqref{eq5:plant} GES. To avoid any misunderstanding, \eqref{eq5:controller} is not the control law  being used; it is just an auxiliary control law that is useful to construct a candidate (e)CLF. The actual MAC law will be given by \eqref{eq5:def_minimum}, with \eqref{eq5:minimum1} and \eqref{eq5:minimum2}, and the AAC law will be given by \eqref{eq5:def_anytime}, \eqref{eq5:anytime2} and \eqref{eq5:anytime1} based on \eqref{eq5:lyapunov}, and neither one of these uses a matrix~$K$.

Using the auxiliary control law, we can find a Lyapunov function for the plant \eqref{eq5:plant} with control law \eqref{eq5:controller} (without ZOH \eqref{eq5:controllaw}) by employing the following intermediate result. This intermediate result can be seen as a slight extension of the results presented in \cite{kie_ada_ste_TAC92,pol_TAC95} to allow GES to be guaranteed, instead of only global asymptotic stability.

\begin{lemma}\label{th5:ct_lyap}
Assume that there exist a matrix $P\in\Real^{m\times n_x}$, with $\mathrm{rank}(P) = n_x$, a matrix $Q \in\Real^{m \times m}$ and a scalar $\hat\alpha>0$ satisfying
\begin{subequations}\label{eq5:ct_lyap}
\begin{align}
P (A+BK) - Q P &= 0 \\
 [Q]_{ii} + \!\!\!\sum_{j\in\{1,\ldots,m\}\backslash\{i\}}\!\!\! \big| [Q]_{ij} \big| &\leqslant -\hat\alpha,
\end{align}
\end{subequations}
for all $i\in\{1,\ldots,m\}$. Then, control law \eqref{eq5:controller} renders the plant \eqref{eq5:plant} GES with convergence rate $\hat\alpha$ and gain $\hat{c}=\overline{a}/\underline{a}$, with $\overline{a}$ and $\underline{a}$ as in \eqref{eq5:lyapunov2}.
\end{lemma}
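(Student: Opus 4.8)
The plan is to use the candidate function $V(x)=\|Px\|_\infty$ from \eqref{eq5:lyapunov} (with $q=1$) as a Lyapunov function for the closed loop $\dd{t}x=(A+BK)x$ and to show it decays at rate $\hat\alpha$ along solutions. The crucial point is that the first identity in \eqref{eq5:ct_lyap} intertwines the closed-loop dynamics with $Q$: introducing the coordinate $z:=Px\in\Real^m$, any closed-loop solution $x(\cdot)$ satisfies $\dd{t}z=P(A+BK)x=QPx=Qz$, so $z$ obeys the linear ODE $\dd{t}z=Qz$. Hence it suffices to prove the scalar estimate $\|z(t)\|_\infty\leqslant e^{-\hat\alpha t}\|z(0)\|_\infty$ for solutions of $\dd{t}z=Qz$, and then to transfer it back to $x$ using the norm equivalence \eqref{eq5:lyapunov2}.

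To obtain the decay of $t\mapsto\|z(t)\|_\infty$, I would work with the upper-right Dini derivative $D^+$, since $\|\cdot\|_\infty$ is only piecewise smooth. Fix $t$ with $\|z(t)\|_\infty>0$ (if $\|z(t)\|_\infty=0$ then $z\equiv 0$ and nothing is to be shown), let $I(t):=\{i\,|\,|z_i(t)|=\|z(t)\|_\infty\}$ be the set of active rows, and use the standard active-set formula $D^+\|z(t)\|_\infty=\max_{i\in I(t)}\dd{t}|z_i(t)|$. For $i\in I(t)$ one has $z_i(t)\neq 0$, so $\dd{t}|z_i(t)|=\mathrm{sign}(z_i(t))[Qz(t)]_i$, and
\begin{equation*}
\dd{t}|z_i(t)| = [Q]_{ii}|z_i(t)| + \mathrm{sign}(z_i(t))\!\!\sum_{j\neq i}\!\![Q]_{ij}z_j(t) \leqslant \Big([Q]_{ii} + \!\!\sum_{j\neq i}\!\!\big|[Q]_{ij}\big|\Big)\|z(t)\|_\infty \leqslant -\hat\alpha\,\|z(t)\|_\infty,
\end{equation*}
where the first inequality uses $|z_j(t)|\leqslant\|z(t)\|_\infty=|z_i(t)|$ and the last one is the second inequality in \eqref{eq5:ct_lyap}. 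Thus $D^+\|z(t)\|_\infty\leqslant-\hat\alpha\|z(t)\|_\infty$ for all $t$, and the comparison lemma for Dini differential inequalities yields $\|z(t)\|_\infty\leqslant e^{-\hat\alpha t}\|z(0)\|_\infty$ for all $t\in\Real_+$.

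Finally, substituting $z=Px$ and using \eqref{eq5:lyapunov2}, which gives $\underline{a}\|x\|\leqslant\|Px\|_\infty\leqslant\overline{a}\|x\|$ with $\underline{a}>0$ because $\rank(P)=n_x$, I get $\underline{a}\|x(t)\|\leqslant\|Px(t)\|_\infty\leqslant e^{-\hat\alpha t}\|Px(0)\|_\infty\leqslant\overline{a}\,e^{-\hat\alpha t}\|x(0)\|$, i.e. $\|x(t)\|\leqslant\tfrac{\overline{a}}{\underline{a}}e^{-\hat\alpha t}\|x(0)\|$, which is precisely GES with convergence rate $\hat\alpha$ and gain $\hat{c}=\overline{a}/\underline{a}$ in the sense of Definition~\ref{th5:GES}. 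The one step that must be handled with care is the nonsmoothness of $\|\cdot\|_\infty$: one has to justify the active-set formula for $D^+\|z(t)\|_\infty$ and invoke a nonsmooth comparison principle rather than naively differentiating the maximum; everything else is routine linear-systems bookkeeping. (Alternatively, one can avoid Dini derivatives altogether by observing that $w(t):=e^{\hat\alpha t}\|z(t)\|_\infty$ is continuous and nonincreasing on every interval on which the active index is constant, hence nonincreasing overall.)
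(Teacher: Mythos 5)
Your proof is correct and follows essentially the same route as the paper's: both exploit the intertwining relation $P(A+BK)=QP$ to reduce the problem to the decay of $\|Px\|_\infty$ along $\dd{t}z=Qz$, bound the (Dini) derivative of $t\mapsto\|z(t)\|_\infty$ using the row condition on $Q$, and conclude via a comparison lemma together with the norm equivalence \eqref{eq5:lyapunov2}. The only cosmetic difference is that the paper expresses the bound through the quantity $\lim_{s\downarrow 0}\tfrac{1}{s}(\|I+sQ\|_\infty-1)$ (the $\infty$-norm logarithmic norm of $Q$, which condition (\ref{eq5:ct_lyap}b) bounds by $-\hat\alpha$), whereas you carry out the equivalent componentwise active-index computation explicitly.
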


\begin{proof}
The proof can be found in Appendix \ref{5sec:appendix}.
\end{proof}

Note that it is always possible, given stabilisability of the pair $(A,B)$, to find a matrix $P$ satisfying the hypotheses of Lemma \ref{th5:ct_lyap}, and constructive methods to obtain a matrix $P$ are given in \cite{kie_ada_ste_TAC92,pol_TAC95}. The second step in the procedure is to show that a matrix $P$ satisfying the conditions of Lemma \ref{th5:ct_lyap}, renders the plant \eqref{eq5:plant} with ZOH \eqref{eq5:controllaw} GES in case the auxiliary control law is given, for all $k\in\Nat$, by
\begin{equation}
\cas{ \hat{u}_k &= K x(t_k) \\ t_{k+1} &= t_k + h} \label{eq5:dt_controller}
\end{equation}
provided that $h~>~0$ is well chosen.

\begin{lemma}\label{th5:interevent}
Suppose the conditions of Lemma \ref{th5:ct_lyap} are satisfied. Then, for each $\alpha>0$ satisfying $\alpha<\hat\alpha$, the system given by \eqref{eq5:plant}, \eqref{eq5:controllaw} and \eqref{eq5:dt_controller} is GES with convergence rate $\alpha$ and gain $c=\bar{c}(\alpha,\|K\|,h)$ as in \eqref{eq5:constant_a}, for all $h<h_{\max}(\alpha)$ with
\begin{align}
\!h_{\max}(\alpha) = \min\!\big\{ \hat{h} >0 \, \big|  \|P (e^{A \hat{h}} + \!\textstyle\int_0^{\hat{h}}\!e^{As} \mathrm{d}s  BK ) (P^{\!\top}\!P)^{-1} P^{\!\top}\|_{\infty}\!> e^{-\alpha \hat{h}} \big\}.\!\label{eq5:dt_lyap}
\end{align}
\end{lemma}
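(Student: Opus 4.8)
The plan is to show that a matrix $P$ satisfying the conditions of Lemma \ref{th5:ct_lyap} is in fact an ordinary CLF for the sampled-data plant \eqref{eq5:plant} with \eqref{eq5:controllaw}, a convergence rate $\alpha$, a control gain bound $\beta = \|K\|$ and the interexecution time $h$, in the sense of Definition \ref{th5:stab_cond_a}, for every $h < h_{\max}(\alpha)$; the GES claim with the asserted gain then follows directly from Lemma \ref{th5:stab_a}. The candidate control input at a sampling instant is the obvious one, $\hat u = K x$, which automatically satisfies $\|\hat u\| \leqslant \|K\|\,\|x\|$, so the only thing that needs verifying is the decrease inequality $\|P(e^{Ah}x + \int_0^h e^{As}\mathrm{d}s\, BK x)\|_\infty \leqslant e^{-\alpha h}\|Px\|_\infty$ for all $x\in\Real^{n_x}$. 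Writing $G(h) := P(e^{Ah} + \int_0^h e^{As}\mathrm{d}s\, BK)$, this is equivalent to $\|G(h) x\|_\infty \leqslant e^{-\alpha h}\|Px\|_\infty$ for all $x$, i.e.\ (substituting $y = Px$ on the range of $P$, which has full column rank) to $\|G(h)(P^\top P)^{-1}P^\top\|_\infty \leqslant e^{-\alpha h}$, and this is exactly the complement of the set appearing in \eqref{eq5:dt_lyap}. So the core of the argument is: for every $h < h_{\max}(\alpha)$ we have $\|G(h)(P^\top P)^{-1}P^\top\|_\infty \leqslant e^{-\alpha h}$.

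To establish this, I would first check the boundary/initial behaviour. At $h = 0$ one has $G(0)(P^\top P)^{-1}P^\top = P(P^\top P)^{-1}P^\top$, which is an idempotent projection of induced $\infty$-norm at least $1$; since in addition the matrix $\frac{\mathrm d}{\mathrm d h}\big[G(h)(P^\top P)^{-1}P^\top\big]$ at $h=0$ equals $P(A+BK)(P^\top P)^{-1}P^\top = QP(P^\top P)^{-1}P^\top$ (using $PA + PBK = P(A+BK) = QP$ from \eqref{eq5:ct_lyap}), the row-sum condition (14b) forces the norm to start strictly below the curve $e^{-\alpha h}$ for $h$ just above $0$ whenever $\alpha < \hat\alpha$ — this is essentially the observation that governs the proof of Lemma \ref{th5:ct_lyap} and provides the continuous-time strict decrease of $\|P\cdot\|_\infty$ along $x(t) = e^{(A+BK)t}x(0)$ at rate $\hat\alpha$. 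More robustly, from $P e^{(A+BK)t} = e^{Qt} P$ (which follows by differentiating and using $PA+PBK = QP$) together with $\|e^{Qt}\|_\infty \leqslant e^{-\hat\alpha t}$ (a standard consequence of (14b)), one gets $\|P e^{(A+BK)t}(P^\top P)^{-1}P^\top\|_\infty \leqslant e^{-\hat\alpha t}\|P(P^\top P)^{-1}P^\top\|_\infty$; however $G(h)$ is $P$ applied to the \emph{zero-order-hold} transition map $e^{Ah} + \int_0^h e^{As}\mathrm ds\,BK$, not to $e^{(A+BK)h}$, so one cannot simply invoke this. Instead, define $\varphi(h) := \|G(h)(P^\top P)^{-1}P^\top\|_\infty$ and note $\varphi$ is continuous, $\varphi(0) = \|P(P^\top P)^{-1}P^\top\|_\infty \geqslant 1 = e^{-\alpha\cdot 0}$ with the right-derivative comparison above showing $\varphi(h) < e^{-\alpha h}$ for small $h>0$ (here I would use that $\varphi(0^+)$ drops below $1$ because $P(P^\top P)^{-1}P^\top$ is a projection whose perturbation by $h\,QP(P^\top P)^{-1}P^\top$ decreases the $\infty$-norm at first order — the same computation as in Lemma \ref{th5:ct_lyap}). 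By definition, $h_{\max}(\alpha)$ is the infimum of $\hat h>0$ at which $\varphi(\hat h) > e^{-\alpha\hat h}$, so for every $h<h_{\max}(\alpha)$ one has, by continuity and the fact that the set $\{\hat h : \varphi(\hat h) > e^{-\alpha\hat h}\}$ does not intersect $(0,h_{\max}(\alpha))$, that $\varphi(h)\leqslant e^{-\alpha h}$ — which is exactly what was needed. Feeding $\underline a,\overline a$ from \eqref{eq5:lyapunov2}, $q=1$, $\beta = \|K\|$ into Lemma \ref{th5:stab_a} yields GES with convergence rate $\alpha$ and gain $c = \bar c(\alpha,\|K\|,h)$.

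The main obstacle I anticipate is the delicate step asserting that $\varphi(h) \leqslant e^{-\alpha h}$ for all $h$ \emph{strictly} below $h_{\max}(\alpha)$ — in particular establishing that $\varphi(h)<e^{-\alpha h}$ for sufficiently small positive $h$ (so that the interval $(0,h_{\max}(\alpha))$ is nonempty and lies in the sublevel region) rather than merely $\varphi(h)\leqslant 1$. This hinges on showing the first-order decay of the $\infty$-norm of the projection $P(P^\top P)^{-1}P^\top$ under perturbation in the direction $QP(P^\top P)^{-1}P^\top$, which requires an argument about the maximizing row(s) of the projection and uses (14b); it is precisely the mechanism already exploited in the proof of Lemma \ref{th5:ct_lyap}, so I would expect to reuse that argument, possibly by reducing the statement to: if $M$ has $\|M\|_\infty$ attained in row $i_0$ and $\|e^{Qt}\|_\infty \leqslant e^{-\hat\alpha t}$, then $\|(I + h Q + o(h))M\|_\infty \leqslant (1-\hat\alpha h + o(h))\|M\|_\infty$, hence $<e^{-\alpha h}\|M\|_\infty$ for small $h$ when $\alpha<\hat\alpha$. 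The remaining computations (continuity of $\varphi$, smoothness of $h\mapsto G(h)$, and the substitution $y=Px$) are routine.
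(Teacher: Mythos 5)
Your proposal is correct and follows essentially the same route as the paper: reduce the claim to the one-step decrease $\|P(e^{Ah}+\int_0^h e^{As}\mathrm{d}s\,BK)x\|_\infty\leqslant e^{-\alpha h}\|Px\|_\infty$ for the choice $\hat u=Kx$ (so that $V(x)=\|Px\|_\infty$ is an ordinary CLF with $\beta=\|K\|$ and Lemma \ref{th5:stab_a} supplies the gain), pass to the induced-norm condition via the substitution $x=(P^{\top}P)^{-1}P^{\top}(Px)$, and observe that this condition holds for every $h<h_{\max}(\alpha)$ directly from the definition of $h_{\max}(\alpha)$ as the smallest violating time. Two minor caveats: the induced-norm condition is only \emph{sufficient} for the decrease (the supremum defining $\|\cdot\|_\infty$ ranges over all of $\Real^m$, not just the range of $P$), which is the direction you need anyway, and the additional work you anticipate on establishing $\varphi(h)<e^{-\alpha h}$ for small $h>0$ concerns non-vacuousness of the interval $(0,h_{\max}(\alpha))$ rather than the stated implication, and is addressed neither by the lemma statement nor by the paper's proof.
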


\begin{proof}
The proof can be found in Appendix \ref{5sec:appendix}.
\end{proof}

Using the matrix $P$ and the function $h_{\max}(\alpha)$ obtained from Lemmas \ref{th5:ct_lyap} and \ref{th5:interevent}, we can now formally state the conditions under which the proposed solutions to the MAC and the AAC problem are well defined and how to achieve a desired convergence rate $\alpha$ and a desired gain~$c$.

\begin{theorem}\label{th5:cor1}
Assume there exist matrices $P\in\Real^{m\times n_x}$, $K\in\Real^{n_u\times n_x}$, and a scalar $\hat\alpha>0$ satisfying the conditions of Lemma \ref{th5:ct_lyap}, and let $0< \alpha <\hat\alpha$ and $c>\hat{c}$. If the control gain bound $\beta$ satisfies $\beta\geqslant\|K\|_\infty$ and the set $\Hcal:=\{\hbar_1,\ldots,\hbar_L\}$, $L\in\Nat$, is such that $\hbar_1<h_{\max}(\alpha)$ as in \eqref{eq5:dt_lyap}, and $c\geqslant\bar{c}(\alpha,\beta,\Delta_\hbar,\hbar_L)$ as in \eqref{eq5:constant}, then the MAC law \eqref{eq5:def_minimum}, with \eqref{eq5:f}, \eqref{eq5:minimum}, \eqref{eq5:minimum1}, \eqref{eq5:minimum2} and \eqref{eq5:lyapunov}, is well defined and renders the plant \eqref{eq5:plant} with ZOH \eqref{eq5:controllaw} GES with the convergence rate $\alpha$ and the gain $c$.
\end{theorem}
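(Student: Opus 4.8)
The plan is to reduce the statement to Theorem~\ref{th5:MAC} by showing that the $\infty$-norm-based candidate $V(x)=\|Px\|_\infty$ is an \emph{ordinary} CLF for \eqref{eq5:plant} with \eqref{eq5:controllaw}, the convergence rate $\alpha$, the control gain bound $\beta$ and the interexecution time $\hbar_1$, in the sense of Definition~\ref{th5:stab_cond_a}. Once that is established, Theorem~\ref{th5:MAC} supplies everything at once: an ordinary CLF for $\hbar_1$ forces $F_1(x)\neq\emptyset$ for all $x\in\Real^{n_x}$ (as noted just before that theorem), so the MAC law is well defined, and the theorem then gives GES with convergence rate $\alpha$ and gain $\bar c(\alpha,\beta,\Delta_\hbar,\hbar_L)$ from \eqref{eq5:constant}; since the hypotheses require $c\geqslant\bar c(\alpha,\beta,\Delta_\hbar,\hbar_L)$, the bound \eqref{eq5:stab_def} then holds a fortiori with the larger gain $c$. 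I would also point out that the assumption $c>\hat c$ is not used directly: it only guarantees that the requirement $c\geqslant\bar c(\alpha,\beta,\Delta_\hbar,\hbar_L)$ can actually be met by choosing $\Hcal$ fine enough, since $\bar c(\alpha,\beta,\Delta_\hbar,\hbar_L)\to\hat c$ as $\Delta_\hbar\to0$.

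To verify the CLF property I would check the requirements of Definition~\ref{th5:stab_cond_a} in turn. The sandwich inequality \eqref{eq5:stab_cond} holds with $q=1$ and with $\overline a,\underline a$ given by \eqref{eq5:lyapunov2}, where $\rank(P)=n_x$ ensures $\underline a>0$ (so that $\hat c=\overline a/\underline a$ is finite). As the candidate control input at a state $x$ I would take $\hat u=Kx$, supplied by the auxiliary feedback \eqref{eq5:controller}; the gain constraint is immediate from $\beta\geqslant\|K\|_\infty$, since $\|\hat u\|_\infty=\|Kx\|_\infty\leqslant\|K\|_\infty\|x\|_\infty\leqslant\beta\|x\|_\infty$.

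The only step with real content is the decrease inequality, i.e.\ $f(x,Kx,\hbar_1,\alpha)\leqslant0$ with $f$ as in \eqref{eq5:f}. Under $\hat u=Kx$ the ZOH-sampled one-step map over an interval of length $\hbar_1$ is $x\mapsto\Phi(\hbar_1)x$ with $\Phi(h):=e^{Ah}+\int_0^h e^{As}\mathrm{d}s\,BK$, and since $P$ has full column rank one has $(P^\top P)^{-1}P^\top P=I$, hence
\[
P\,\Phi(\hbar_1)\,x=\big(P\,\Phi(\hbar_1)\,(P^\top P)^{-1}P^\top\big)\,Px .
\]
Taking $\infty$-norms and using submultiplicativity of the induced $\infty$-norm gives $V(\Phi(\hbar_1)x)\leqslant\|P\Phi(\hbar_1)(P^\top P)^{-1}P^\top\|_\infty\,V(x)$, and the definition of $h_{\max}(\alpha)$ in \eqref{eq5:dt_lyap} bounds this matrix norm by $e^{-\alpha\hbar_1}$ whenever $\hbar_1<h_{\max}(\alpha)$ --- exactly the hypothesis imposed on $\Hcal$. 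With $q=1$ this reads $f(x,Kx,\hbar_1,\alpha)\leqslant0$, so $Kx\in F_1(x)$, completing the verification that $V=\|Px\|_\infty$ is an ordinary CLF with the stated parameters. (Alternatively, this decrease is already contained in the proof of Lemma~\ref{th5:interevent}, and could simply be cited.)

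I expect the main obstacle to be organisational rather than analytic: phrasing the reduction to ``$V=\|Px\|_\infty$ is an ordinary CLF for $(\alpha,\beta,\hbar_1)$'' cleanly, getting the projector identity $(P^\top P)^{-1}P^\top P=I$ and the direction of the $h_{\max}$ inequality right, and keeping the roles of $\hat c$, $\bar c(\alpha,\beta,\Delta_\hbar,\hbar_L)$ and $c$ straight. The genuine work --- the continuous-time Lyapunov construction, the sampling bound $h_{\max}$, and the GES conclusion for the MAC law --- is carried entirely by Lemmas~\ref{th5:ct_lyap} and~\ref{th5:interevent} and by Theorem~\ref{th5:MAC}, so this result is essentially their assembly.
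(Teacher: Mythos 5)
Your proposal is correct and follows essentially the same route as the paper: both reduce the claim to Theorem~\ref{th5:MAC} by showing $Kx\in F_1(x)$ for all $x$, which the paper obtains by citing Lemma~\ref{th5:interevent} and which you re-derive explicitly via the projector identity and the definition of $h_{\max}(\alpha)$ in \eqref{eq5:dt_lyap}. Your added remarks (the role of $c>\hat c$ as a feasibility condition, and $\bar c\to\hat c$ as $\Delta_\hbar\to0$) are accurate but not needed.
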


\begin{proof}
The proof can be found in Appendix \ref{5sec:appendix}.
\end{proof}

\begin{theorem}\label{th5:cor2}
Assume there exist matrices $P\in\Real^{m\times n_x}$, $K\in\Real^{n_u\times n_x}$, and a scalar $\hat\alpha>0$ satisfying the conditions of Lemma \ref{th5:ct_lyap}, and let $0< \alpha <\hat\alpha$ and $c>\hat{c}$ be given. If the control gain bound $\beta$, satisfies $\beta\geqslant\|K\|_\infty$, the set $\Acal:=\{\bar\alpha_1,\ldots,\bar\alpha_J\}$, $J\in\Nat$, is such that $\alpha\leqslant\bar\alpha_1<\hat\alpha$, the set $\Hcal:=\{\hbar_1,\ldots,\hbar_L\}$, $L\in\Nat$, is such that $\hbar_L<h_{\max}(\bar\alpha_1)$ as in \eqref{eq5:dt_lyap}, and $c\geqslant \bar{c}(\bar\alpha_1,\beta,\Delta_\hbar,\hbar_L)$ as in \eqref{eq5:constant}, then the AAC law \eqref{eq5:def_anytime}, with \eqref{eq5:f}, \eqref{eq5:anytime}, \eqref{eq5:anytime2}, \eqref{eq5:anytime1} and \eqref{eq5:lyapunov}, is well defined and renders the plant \eqref{eq5:plant} with ZOH \eqref{eq5:controllaw} GES with at least convergence rate $\alpha = \bar\alpha_1$, and possibly a better convergence rate, and a certain gain~$c$.
\end{theorem}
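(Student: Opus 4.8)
The plan is to reduce Theorem~\ref{th5:cor2} to Theorem~\ref{th5:AAC} together with Lemmas~\ref{th5:ct_lyap} and~\ref{th5:interevent}, exactly as Theorem~\ref{th5:cor1} is reduced to Theorem~\ref{th5:MAC}. The key observation is that Theorem~\ref{th5:AAC} already guarantees the desired conclusion (GES with convergence rate $\bar\alpha_1$ and gain $\bar{c}(\bar\alpha_1,\beta,\Delta_\hbar,\hbar_L)$, hence $c$), provided two things hold: first, that $V(x)=\|Px\|_\infty$ is a genuine eCLF for \eqref{eq5:plant} with \eqref{eq5:controllaw}, the convergence rate $\bar\alpha_1$, the control gain bound $\beta$ and the set $\Hcal$; and second, that $\Acal$ satisfies the stated monotonicity. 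The second is immediate from the hypothesis $\bar\alpha_{j+1}>\bar\alpha_j>0$. So the substance of the proof is verifying the eCLF property for $V(x)=\|Px\|_\infty$.

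First I would fix $P$, $K$, $\hat\alpha$ as given by the hypotheses (so that Lemma~\ref{th5:ct_lyap} applies), and note that \eqref{eq5:stab_cond_a} holds with $q=1$ and $\overline a,\underline a$ as in \eqref{eq5:lyapunov2}, with $\underline a>0$ because $\rank(P)=n_x$; this also gives $\hat c=\overline a/\underline a$. Then, for a fixed but arbitrary $x\in\Real^{n_x}$, I would exhibit the required control input as $\hat u := Kx$, which immediately satisfies $\|\hat u\|_\infty \leqslant \|K\|_\infty\|x\|_\infty \leqslant \beta\|x\|_\infty$ by the hypothesis $\beta\geqslant\|K\|_\infty$. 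The remaining task is the decrease condition \eqref{eq5:stab_cond_b}: for every $l\in\{1,\ldots,L\}$,
\begin{equation*}
\big\|P\big(e^{A\hbar_l}x + \textstyle\int_0^{\hbar_l}e^{As}\mathrm{d}s\,BKx\big)\big\|_\infty \leqslant e^{-\bar\alpha_1 \hbar_l}\|Px\|_\infty .
\end{equation*}
Writing $x=(P^\top P)^{-1}P^\top (Px)$ (valid since $P$ has full column rank) and setting $y:=Px$, the left-hand side is $\big\| P\big(e^{A\hbar_l}+\int_0^{\hbar_l}e^{As}\mathrm{d}s\,BK\big)(P^\top P)^{-1}P^\top\,y\big\|_\infty \leqslant \big\|P\big(e^{A\hbar_l}+\int_0^{\hbar_l}e^{As}\mathrm{d}s\,BK\big)(P^\top P)^{-1}P^\top\big\|_\infty\,\|y\|_\infty$, so it suffices that the induced-$\infty$-norm of that matrix is at most $e^{-\bar\alpha_1\hbar_l}$. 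This is precisely the negation of the condition defining $h_{\max}(\bar\alpha_1)$ in \eqref{eq5:dt_lyap}, evaluated at $\hat h=\hbar_l$; since $\hbar_l\leqslant\hbar_L<h_{\max}(\bar\alpha_1)$ for all $l$ and the set in \eqref{eq5:dt_lyap} is (by the argument in the proof of Lemma~\ref{th5:interevent}) an interval of the form $(h_{\max}(\bar\alpha_1),\infty)$ or empty below $h_{\max}(\bar\alpha_1)$, every $\hbar_l$ lies strictly below the threshold and the inequality holds. I would invoke Lemma~\ref{th5:interevent} directly here rather than re-deriving the interval structure.

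Having established that $V(x)=\|Px\|_\infty$ is an eCLF for the convergence rate $\bar\alpha_1$, control gain bound $\beta$ and set $\Hcal$, well-definedness of the AAC law follows from the discussion preceding the theorem (the nesting $F_{l_1,j_1}(x)\supseteq F_{l_2,j_2}(x)$ when $l_1\geqslant l_2$, $j_1\leqslant j_2$, reduces non-emptiness of $F_{\mathrm{AAC}}$ to non-emptiness of $F_{L,1}(x)$, and $\hat u=Kx\in F_{L,1}(x)$ by what was just shown). Then Theorem~\ref{th5:AAC} applies verbatim and yields GES with at least convergence rate $\bar\alpha_1$ and gain $\bar c(\bar\alpha_1,\beta,\Delta_\hbar,\hbar_L)\leqslant c$, which (since GES with gain $\bar c\leqslant c$ trivially implies GES with gain $c$) completes the proof. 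The only mildly delicate point — and the one I would be most careful about — is the interval structure of the feasibility set in \eqref{eq5:dt_lyap}: one must be sure that $\hbar_L<h_{\max}(\bar\alpha_1)$ really forces the norm inequality for \emph{all} $\hbar_l$, not just that some $\hbar_l$ could slip above an intermediate bad point; this is exactly what Lemma~\ref{th5:interevent} is designed to hand us, so invoking it cleanly is the crux. Everything else is bookkeeping with $q=1$ and the full-column-rank pseudoinverse identity.
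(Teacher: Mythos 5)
Your proof is correct and follows essentially the same route as the paper's: exhibit $\hat u = Kx$ as a feasible point of $F_{L,1}(x)$ via Lemma~\ref{th5:interevent} (equivalently, the defining inequality of $h_{\max}(\bar\alpha_1)$ in \eqref{eq5:dt_lyap}), use the nesting of the sets $F_{l,j}$ for well-definedness, and conclude by Theorem~\ref{th5:AAC}; you merely spell out the eCLF verification that the paper leaves implicit. The one remark worth making is that your worry about the ``interval structure'' of the feasibility set is moot: since $h_{\max}(\bar\alpha_1)$ is defined as the \emph{minimum} of the set where the norm inequality fails, every $\hat h < h_{\max}(\bar\alpha_1)$ satisfies the inequality by definition, so no monotonicity argument is needed.
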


\begin{proof}
The proof can be found in Appendix \ref{5sec:appendix}.
\end{proof}

These theorems formally show how to choose the scalar $\beta$, and the sets $\Acal$ and $\Hcal$ to make each of the proposed solutions to the two control problems well defined and to achieve a desired convergence rate $\alpha$ and a desired gain~$c$.


\section[Making the Solutions Computationally Tractable]{Making the Solutions to the MAC and the AAC Problem Computationally Tractable} \label{sec5:LP}

As a final step in providing a complete solution to the MAC and the AAC problem, we will now propose computationally efficient algorithms to compute the control inputs generated by the MAC and AAC laws using online optimisation. To do so, note that the $\infty$-norm-based (e)CLFs as in \eqref{eq5:lyapunov} allow us to rewrite \eqref{eq5:f} as
\begin{equation}
f(x,u,h,\alpha) = \big\| P e^{Ah} x + \textstyle\int_0^{h} \!\!Pe^{As}B \mathrm{d}s \,u \big\|_\infty - e^{-\alpha h} \| Px \|_\infty.
\end{equation}
We can now observe that the constraint $f(x,u,h,\alpha)\leqslant0$, which appears in \eqref{eq5:minimum1} and \eqref{eq5:anytime2}, is equivalent to
\begin{equation}
\big| [P e^{A\hbar_l} x + \textstyle\int_0^{h} \!\!Pe^{As}B \mathrm{d}s \,u ]_i \big| - e^{-\alpha h} \| Px \|_\infty \!\leqslant 0,
\end{equation}
for all $i\in\{1,\ldots,m\}$, which is equivalent to $\bar{f}(x,u,h,\alpha)~\leqslant~0$, where
\begin{align}\label{eq5:constraints}
&\bar{f}(x,u,h,\alpha) := \bbm P e^{A h} x + P\!\!\textstyle\int_0^{h} e^{As} \mathrm{d}s B u \\ - P e^{A h} x - P\!\!\textstyle\int_0^{h} e^{As} \mathrm{d}s B u \ebm - e^{-\alpha h} \| Px \|_\infty {\tiny\bbm 1 \\ \vdots \\ \\ 1\ebm}
\end{align}
and the inequality is assumed to be taken elementwise, which results in $2m$ linear scalar constraints for $u$.

Equation \eqref{eq5:constraints} reveals that $\infty$-norm-based (e)CLFs convert the two considered problems into feasibility problems with linear constraints, allowing us to propose an algorithmic solution to the MAC and the AAC problem. The algorithms are based on solving the maximisation that appears in \eqref{eq5:minimum2} and \eqref{eq5:anytime1} by incrementally increasing $\bar{L}$ and $\bar{J}$, respectively.

\begin{algorithm}[Minimum Attention Control]\label{th5:algorithm1}
Let the matrix $P\in\Real^{m\times n_x}$, the scalars $\alpha,\beta>0$ and the set $\Hcal$, satisfying the conditions of Theorem \ref{th5:cor1}, be given. At each $t_k$, $k\in\Nat$, given state $x(t_k)$:
\begin{enumerate}
\item Set $l:=0$ and define $\Ucal^{\text{MAC}}_0 := \bigg\{ u\in\Real^{n_u} \, | \, {\scriptsize\bbm u \\ - u \ebm} - \beta \| x(t_k) \|_\infty {\tiny\bbm 1 \\ \vdots \\ 1 \ebm} \leqslant0 \bigg\}$
\item While $\Ucal^{\text{MAC}}_l \neq \emptyset$, and $l< L$
\bit
\item $\Ucal^{\text{MAC}}_{l+1} := \Ucal^{\text{MAC}}_l \cap \{ u\in\Real^{n_u} \, | \bar{f}(x(t_k),u,\hbar_{l+1},\alpha)\leqslant0 \}$
\item $l:=l+1$
\eit
\item If $l=L$ and $\Ucal^{\text{MAC}}_L \neq \emptyset$, take $\hat{u}_k \in \Ucal^{\text{MAC}}_{L}$, and $t_{k+1} = t_k + \hbar_{L}$
\item Or else, if $\Ucal^{\text{MAC}}_l = \emptyset$, take $\hat{u}_k \in \Ucal^{\text{MAC}}_{l-1}$, and $t_{k+1} = t_k + \hbar_{l-1}.$
\end{enumerate}
\end{algorithm}

\begin{algorithm}[Anytime Attention Control]\label{th5:algorithm2}
Let the matrix $P\in\Real^{m\times n_x}$, the scalar $\beta>0$, and the sets $\Acal$ and $\Hcal$, satisfying the conditions of Theorem \ref{th5:cor2}, be given. At each $t_k$, $k\in\Nat$, given state $x(t_k)$ and given $h_k \in \Hcal$, let $\bar{L}\in\{1,\ldots,L\}$ be such that $h_k=\hbar_{\bar{L}}$, and:
\begin{enumerate}
\item Set $j:=0$ and define $\Ucal^{\text{AAC}}_0 := \bigg\{ u\in\Real^{n_u} \, \bigg| \, {\scriptsize\bbm u \\ - u \ebm} - \beta \| x(t_k) \|_\infty {\tiny\bbm 1 \\ \vdots \\ 1 \ebm} \leqslant0 \bigg\}$
\item While $\Ucal^{\text{AAC}}_j \neq \emptyset$, and $j< J$,
\bit
\item $\Ucal^{\text{AAC}}_{j+1} := \Ucal^{\text{AAC}}_0 \cap \{ u\in\Real^{n_u} \, | \,  \bar{f}(x(t_k),u,\hbar_l,\alpha_{j+1})\leqslant 0 \ \forall \ l\in\{1,\ldots,\bar{L}\}\}$
\item $j:=j+1$
\eit
\item If $j=J$ and $\Ucal^{\text{AAC}}_J \neq \emptyset$, take $\hat{u}_k \in \Ucal^{\text{AAC}}_{J}$
\item Or else, if $\Ucal^{\text{AAC}}_j = \emptyset$, take $\hat{u}_k \in \Ucal^{\text{AAC}}_{j-1}.$
\end{enumerate}
\end{algorithm}

\begin{remark}
Since verifying that $\Ucal^{\text{MAC}}_l \neq\emptyset$, for some $l\in\{1,\ldots,L\}$, is a feasibility test for linear constraints, the algorithm can be efficiently implemented online using existing solvers for linear programs.
\end{remark}


\section{Illustrative Examples} \label{sec5:example}

In this section, we illustrate the presented theory using a well-known example in the NCS literature, see, e.g., \cite{wal_ye_CSM01}, consisting of a linearised model of a batch reactor. For this example, we solve both the MAC and the AAC problem. The linearised batch reactor is given by \eqref{eq5:plant} with
\begin{equation}
\left[ \begin{array}{c|c} \!\!A \!\!& \!\!B \!\!\!\end{array} \right] \!= \!\! { \left[ \begin{array}{@{}r@{\ \, }r@{\ \ }r@{\ \, }r@{\ }|@{} r@{\,}r@{}}
                                         1.380 &   -0.208 & 6.715  & -5.676 & 0     &  0 \\
                                        -0.581 &   -4.290 &  0     &  0.675 & \ 5.679 &  0 \\
                                         1.067 &    4.273 & -6.654 &  5.893 & 1.136 & \ -3.146 \\
                                         0.048 &    4.273 & 1.343  & -2.104 & 1.136 &  0 \end{array} \right]\!}.
\end{equation}

In order to solve the two control problems discussed in this report, we need a suitable (e)CLF. To obtain such a (e)CLF, we use the results from Section \ref{sec5:lyapunov}, and use an auxiliary control law \eqref{eq5:controller}, with
\begin{equation}
K = \left[ \begin{array}{@{}r@{\ \, }r@{\ \ }r@{\ \, }r@{}}
 0.0360 &  -0.5373 &  -0.3344 &  -0.0147 \\
 1.6301 &   0.5716 &   0.8285 &  -0.2821 \end{array} \right]
\end{equation}
yielding that the eigenvalues $A+BK$ are all real valued, distinct, and smaller than or equal to $-2$. This allows us to find a Lyapunov function of the form \eqref{eq5:lyapunov} using Lemma \ref{th5:ct_lyap}, with $P$ being the inverse of the matrix consisting of the eigenvectors of $A+BK$, $Q$ being a diagonal matrix consisting of the eigenvalues of $A+BK$, $\hat\alpha = 2$ and $\hat{c}\approx23.9$. This Lyapunov function will serve as an eCLF in the two control problems.

\begin{figure}[t]
\centering
\subfloat[Evolution of the states of the plant using MAC.]{\includegraphics[width = 0.45\columnwidth]{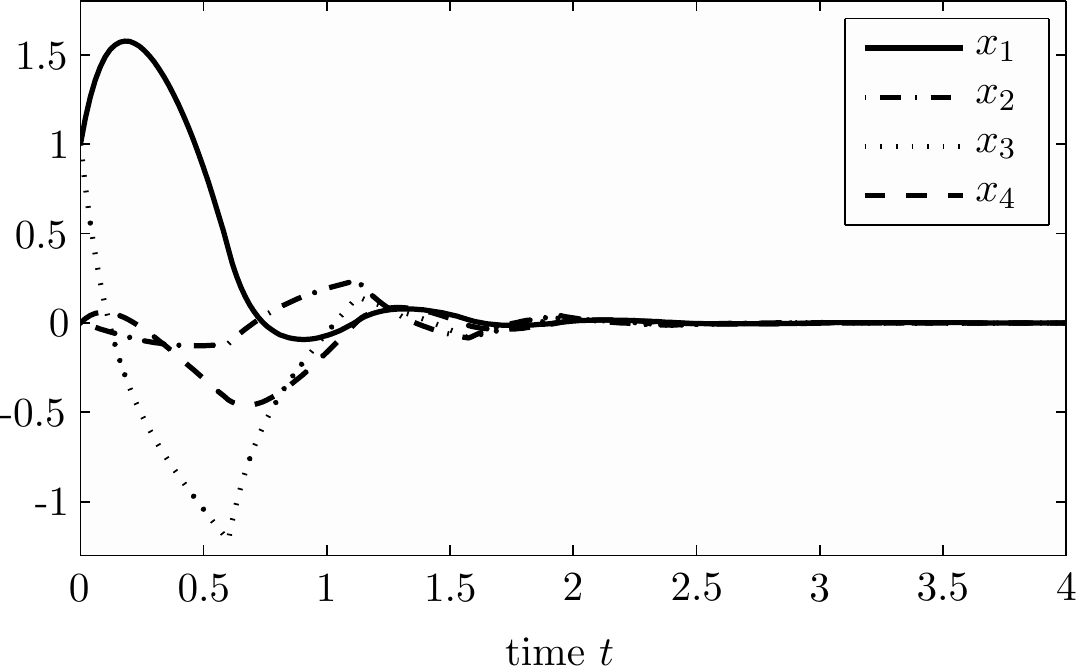}} \hfill
\subfloat[Evolution of the states of the plant using self-triggered control.]{\includegraphics[width = 0.45\columnwidth]{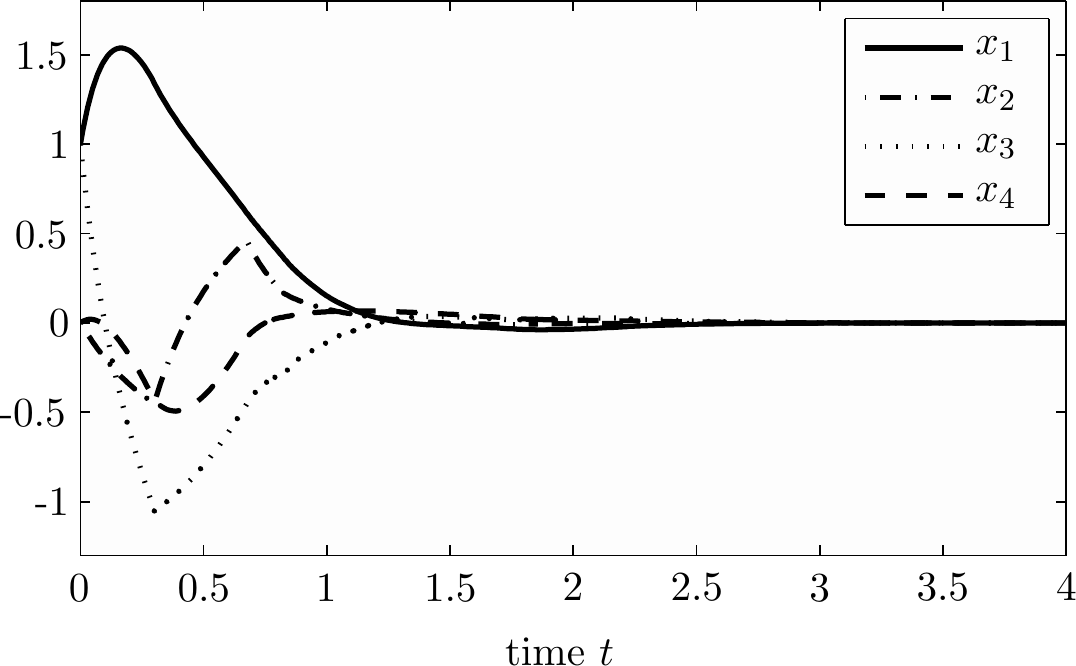}} \\
\subfloat[The decay of the Lyapunov function using MAC and self-triggered control.]{\includegraphics[width = 0.45\columnwidth]{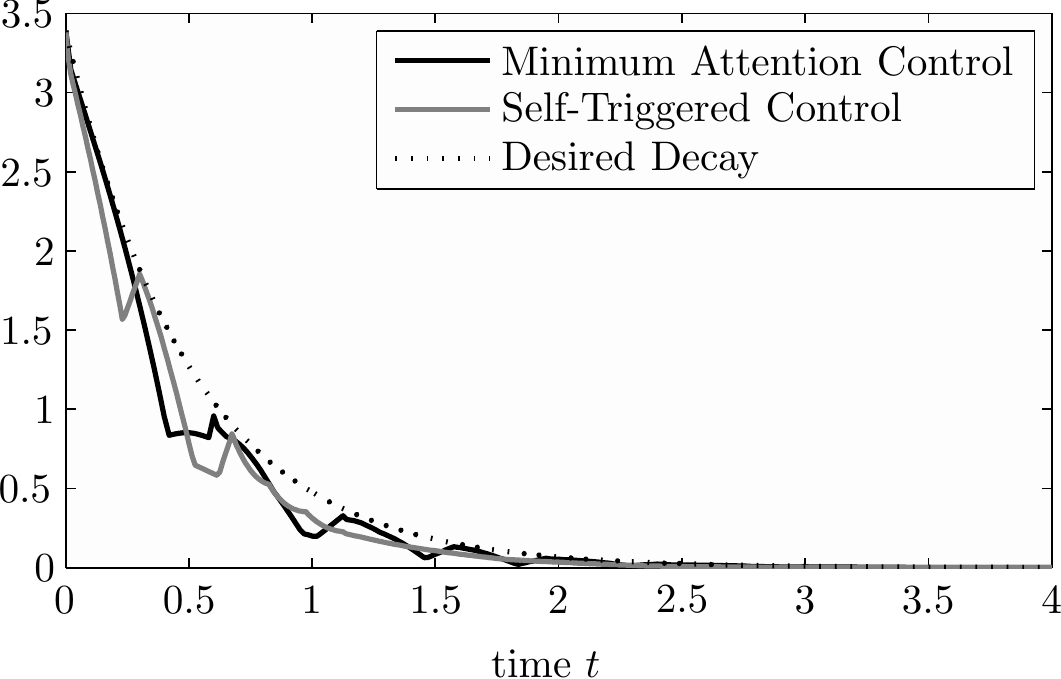}} \hfill
\subfloat[The interexecution times using MAC and self-triggered control.]{\includegraphics[width = 0.45\columnwidth]{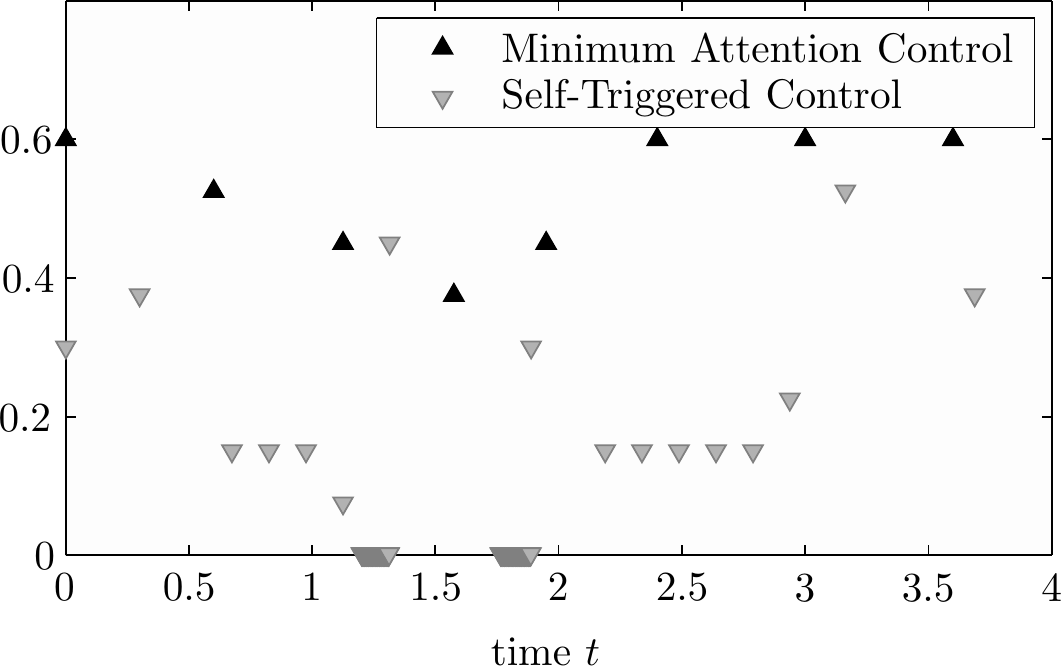}}
\caption{Minimum Attention Control.}\label{fig5:minimum}
\end{figure}

\subsection{The Minimum Attention Control Problem}

Given this eCLF, we can solve the MAC problem using Algorithm \ref{th5:algorithm1}. Before doing so, we use the result of Theorem \ref{th5:cor1} to guarantee that the MAC law is well defined and renders the closed-loop system GES with desired convergence rate $\alpha = 0.98 \hat\alpha = 1.96$ and desired gain $c=4\hat{c}\approx 95.7$. According to Theorem \ref{th5:cor1}, this convergence rate $\alpha$ and this gain $c$ can be achieved by taking $\beta = \| K \|_\infty \approx 3.1$ and
\begin{align}
\Hcal=\{\hbar_1,\ldots,\hbar_{10}\} =\{\tfrac{1.5}{1000},\tfrac{7.5}{100},\tfrac{15}{100},\tfrac{22.5}{100},\tfrac{30}{100},\tfrac{37.5}{100},\tfrac{45}{100},\tfrac{52.5}{100},\tfrac{60}{100}, \tfrac{67.5}{100}\},
\end{align}
because it holds that $\hbar_1<h_{\max}(\alpha)$ and that $\bar{c}(\alpha,\beta,\Delta_\hbar,\hbar_L)\leqslant c$. To implement Algorithm \ref{th5:algorithm1} in \textsc{Matlab}, we use the routine \verb"polytope" of the MPT-toolbox \cite{mpt}, to create the sets $\Ucal^{\text{MAC}}_l$, to remove redundant constraints and to check if the set $\Ucal^{\text{MAC}}_l$, $l\in\{1,\ldots,10\}$, is nonempty.

When we simulate the response of the plant with the resulting MAC law for the initial condition $x(0) = [1 \ 0 \ 1 \ 0 ]^{\!\top}$, we can observe that the closed-loop system is indeed GES, see Figure \ref{fig5:minimum}a, and satisfies the required convergence rate $\alpha$, see Figure \ref{fig5:minimum}c. To show the effectiveness of the theory, we compare our results with the self-triggered control strategy in the spirit of \cite{maz_ant_tab_ECC09}, however tailored to work with $\infty$-norm-based Lyapunov functions, resulting (by using the notation used in this report) in a control law \eqref{eq5:controllaw} with $\hat{u}_k=Kx(t_k)$, and $t_{k+1}=t_k + \hbar_{\bar{L}(x(t_k))}$, where
\begin{align}
&\bar{L}(x(t_k)) = \max\{ \hat{L}\in\{1,\ldots,L\} \,| \, f(x(t_k),Kx(t_k),\hbar_l,\alpha)\leqslant0 \ \forall \ l\in\{1,\ldots,\hat{L}\} \}.
\end{align}
To illustrate that also this control strategy renders the plant \eqref{eq5:plant} GES, we show the response of the plant to the initial condition $x(0) = [1 \ 0 \ 1 \ 0 ]^{\!\top}$ in Figure \ref{fig5:minimum}b, and the decay of the Lyapunov function in Figure \ref{fig5:minimum}c. Note that the decay of the Lyapunov function for MAC is comparable to the decay of the Lyapunov function for self-triggered control. However, when we compare the resulting interexecution times as depicted in Figure \ref{fig5:minimum}d, we can observe that the MAC yields much larger interexecution times. Hence, from a resource utilisation point of view, the proposed MAC outperforms the self-triggered control law.

\subsection{The Anytime Attention Control Problem}

Let us now illustrate the AAC problem, which can be solved using Algorithm~\ref{th5:algorithm2}. In this case, Theorem \ref{th5:cor2} provides conditions under which the AAC law is well defined and renders the closed-loop system GES with guaranteed convergence rate $\alpha = 0.5$ and gain $c=1.05\hat{c}\approx25$. According to Theorem \ref{th5:cor2}, this desired convergence rate $\alpha$ and this desired gain $c$ can be achieved by taking $\beta = 1.4 \| K \|_\infty \approx 4.6$, $\Acal = \{\bar\alpha_1,\ldots,\bar\alpha_{12}\}$, with $\bar\alpha_j = \tfrac{j}{2}$ for $j\in\{1,\ldots,12\}$, and $\Hcal = \{\hbar_1,\ldots,\hbar_{6}\} = \{ 0.011, 0.021, 0.031, 0.041, 0.051, 0.061\}$, because it holds that $\hbar_{6}<h_{\max}(\bar\alpha_1)$ and that $\bar{c}(\bar\alpha_1,\beta,\Delta_\hbar,\hbar_L)\leqslant c$.

When we simulate the response of the plant with the AAC law to the initial condition $x(0) = [1 \ 0 \ 1 \ 0 ]^{\!\top}$, and we take $h_k \in\Hcal$, where $h_k$, $k\in\Nat$, is given by an independent and identically distributed sequence of discrete random variables having a uniform probability distribution, we can observe that the closed-loop system is indeed GES, see Figure \ref{fig5:anytime}a. We also depict the corresponding realisation of $h_k$ for the interval $t\in[0,4]$ in Figure \ref{fig5:anytime}b. 
We conclude that AAC is able to yield high performance, even though the execution times are time-varying and given by a scheduler.

\begin{figure}[t]
\centering
\subfloat[Evolution of the states of the plant using AAC.]{\includegraphics[width = 0.45\columnwidth]{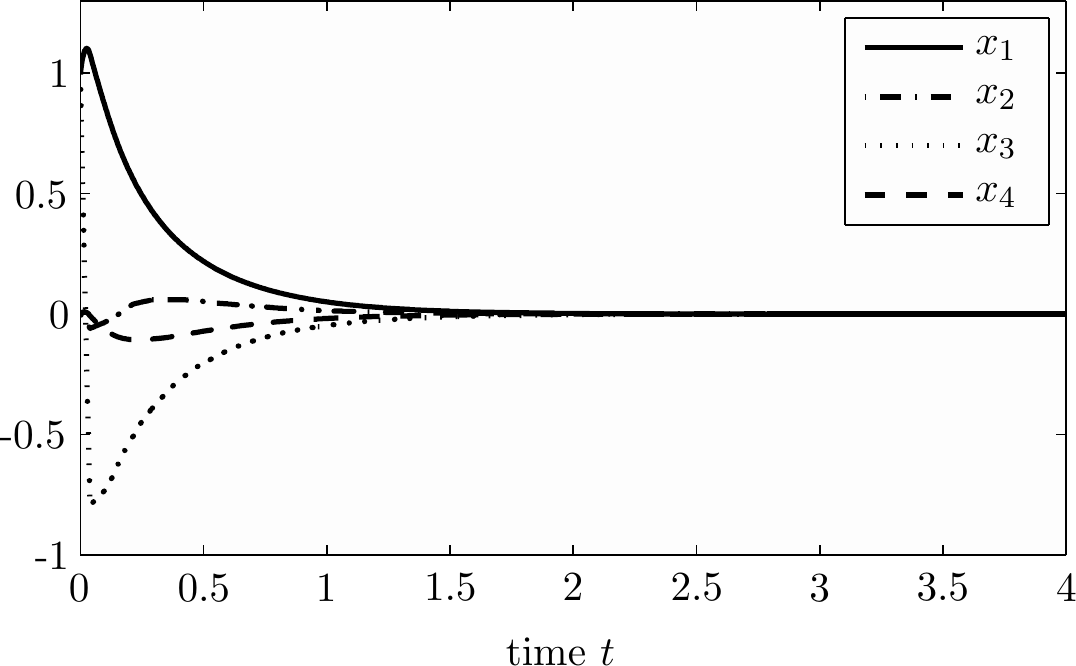}} \hfill
\subfloat[A realisation of the interexecution times for AAC.]{\includegraphics[width = 0.45\columnwidth]{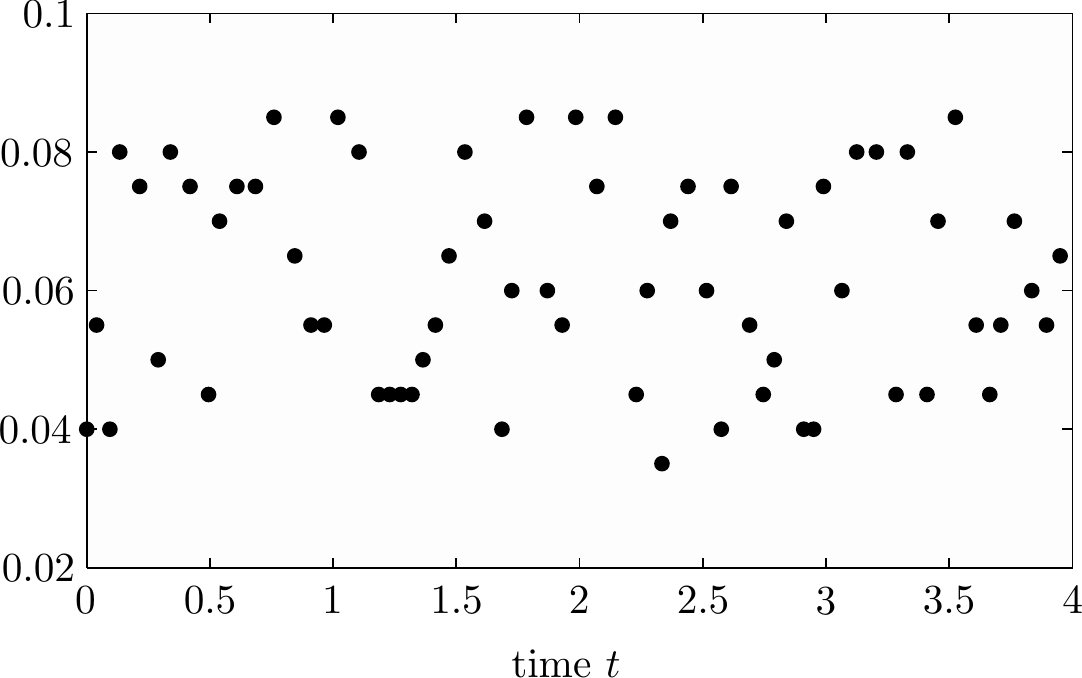}}
\caption{Anytime Attention Control.}\label{fig5:anytime}
\end{figure}


\section{Conclusion} \label{sec5:conclusion}

In this report, we proposed a novel way to solve the minimum attention and the anytime attention control problem. Instrumental for the solutions is a novel extension to the notion of a control Lyapunov function. We solved the two control problems by focussing on linear plants, by considering only a finite number of possible intervals between two subsequent executions of the control task and by choosing the extended control Lyapunov function (eCLF) to be $\infty$-norm-based, which allowed the two control problems to be formulated as linear programs. We provided a technique to obtain suitable eCLFs that render
the solution to the minimum attention control problem feasible with a guaranteed upper bound on the attention (i.e., an lower bound on the inter-execution times), while guaranteeing an \emph{a priori} selected performance level, and that renders solution to the anytime attention control problem feasible with a lower bound on the performance (in terms of a lower bound on the convergence rates), while guaranteeing a minimum level of performance. We illustrated the theory using two numerical examples. In particular, the first example showed that the proposed methodology outperforms a self-triggered control strategy that is available in the literature.

\appendix

\section{Proofs of Theorems and Lemmas}\label{5sec:appendix}

\paragraph{Proof of Lemma \ref{th5:stab}:}
Since \eqref{eq5:stab_cond_b} holds, and since the solutions to \eqref{eq5:plant} with \eqref{eq5:controllaw} satisfy
\begin{equation}
x(t_k+\hbar_l) = e^{A \hbar_l} x(t_k)\!+\!\textstyle\int_0^{\hbar_l}\!\!e^{As}B \mathrm{d}s \, \hat{u}_k,
\end{equation}
we have that
\begin{equation}
V(x(t_k+\hbar_l)) \leqslant e^{-\alpha q (t_k+\hbar_l)} V(x(0)). \label{eq5:stab_cond_1}
\end{equation}
for all $l\in\{0,\ldots,L-1\}$ and for all $t_k$, $k\in\Nat$, with $\hbar_0=0$. Now using \eqref{eq5:stab_cond_a}, we have that \eqref{eq5:stab_cond_1} implies
\begin{equation}
\| x(t_k+\hbar_l) \| \leqslant \sqrt[q]{\tfrac{\overline{a}}{\underline{a}}}  e^{-\alpha (t_k+\hbar_l)} \|x(0)\|, \label{eq5:stab_cond_2}
\end{equation}
for all $l\in\{0,\ldots,L-1\}$ and for all $t_k$, $k\in\Nat$, with $\hbar_0=0$. Moreover, because it holds that $\| \hat{u}_k \| \leqslant \beta \| x(t_k) \|$, the solutions to \eqref{eq5:plant} with \eqref{eq5:controllaw} also satisfy
\begin{align}
\| x(t) \| &\leqslant \| e^{A (t-t_k-\hbar_l)} \| \, \| x(t_k+\hbar_l) \| +\textstyle\int_{t_k+\hbar_l}^t \| e^{A(t-s)} \| \mathrm{d}s\, \| B \| \| \hat{u}_k \| \nonumber\\
           &\leqslant e^{\|A\| \Delta_\hbar} \, \|x(t_k\!+\!\hbar_l) \| + \beta \textstyle\int_0^{\Delta_\hbar} e^{\|A\|s} \mathrm{d}s\, \|B\|\, \| x(t_k) \|, \label{eq5:stab_cond_3}
\end{align}
for all $t\in[t_k+\hbar_l,t_k+\hbar_{l+1})$, $k\in\Nat$, $l\in\{0,\ldots,L-1\}$, with $\Delta_\hbar$ as defined in the hypothesis of the lemma. Substituting \eqref{eq5:stab_cond_2} into this expression (twice) yields
\begin{align}
\| x(t) \| & \leqslant \sqrt[q]{\tfrac{\overline{a}}{\underline{a}}} \Big( e^{\|A\| \Delta_\hbar} \, e^{-\alpha (t_k+\hbar_l)} + \beta\, \textstyle\int_0^{\Delta_\hbar} e^{\|A\|s} \mathrm{d}s \, \|B\| \, e^{-\alpha t_k}  \big) \|x(0)\|, \label{eq5:stab_cond_4}
\end{align}
for all $t\in[t_k+\hbar_l,t_k+\hbar_{l+1})$, $k\in\Nat$, $l\in\{0,\ldots,L-1\}$. Now realising that for all $t\in[t_k+\hbar_l,t_k+\hbar_{l+1})$, $k\in\Nat$, $l\in\{0,\ldots,L-1\}$, it holds that $e^{-\alpha (t_k+\hbar_l)} < e^{-\alpha t + \alpha\Delta_\hbar}$ and that $e^{-\alpha t_k} < e^{-\alpha t + \alpha \hbar_L }$ we have \eqref{eq5:stab_def} with $c$ as given in the hypothesis of the Lemma \ref{th5:stab}.\hfill~$\square$

~\\

\paragraph{Proof of Theorem \ref{th5:MAC}:}
Using the arguments given in Section \ref{th5:problem3}, we have that the hypotheses of the theorem guarantee that $F_{\mathrm{MAC}}(x)\neq\emptyset$ for all $x\in\Real^{n_x}$. By following a similar reasoning as done in the proof of Lemma \ref{th5:stab}, we can show that the MAC law guarantees that \eqref{eq5:stab_cond_4} holds for all $t\in[t_k+\hbar_l,t_k+\hbar_{l+1})$, $k\in\Nat$, $l\in\{0,\ldots,\bar{L}^\star(x(t_k))-1\}$, with $\hbar_0=0$, and all $x\in\Real^{n_x}$. Again realising that for all $t\in[t_k+\hbar_l,t_k+\hbar_{l+1})$, $k\in\Nat$, $l\in\{0,\ldots,\bar{L}^\star(x(t_k))-1\}$, it holds that $e^{-\alpha (t_k+\hbar_l)} < e^{-\alpha t + \alpha\Delta_\hbar}$ and that $e^{-\alpha t_k} < e^{-\alpha t + \alpha \hbar_{\bar{L}^\star(x(t_k))}} \leqslant e^{-\alpha t + \alpha \hbar_L}$ yields \eqref{eq5:stab_def} with gain $c=\bar{c}(\alpha,\beta,\Delta_\hbar,\hbar_L)$ as in~\eqref{eq5:constant}.\hfill~$\square$

~\\

\paragraph{Proof of Theorem \ref{th5:AAC}:}
Using the arguments given in Section \ref{th5:problem4}, we have that the hypotheses of the theorem guarantee that $F_{\mathrm{AAC}}(x,h)\neq\emptyset$ for all $x\in\Real^{n_x}$ and all $h\in\Hcal$. Moreover, as also argued in Section \ref{th5:problem4}, the proposed AAC law guarantees that the solutions of the system \eqref{eq5:plant}, \eqref{eq5:controllaw} with \eqref{eq5:def_anytime} satisfy
\begin{equation}
V(x(t_k+\hbar_l)) \leqslant e^{-\bar\alpha_{\bar{J}^\star(x(t_k),h_k)} q \hbar_l} V(x(t_k)) \leqslant e^{-\bar\alpha_{1} q \hbar_l} V(x(t_k))
\end{equation}
for all $l\in\{0,\ldots,\bar{L}(h_k)-1\}$, $k\in\Nat$. Now using the reasoning of the proof of Lemma \ref{th5:stab}, we can show that this implies that \eqref{eq5:stab_cond_4} holds for all $t\in[t_k+\hbar_l,t_k+\hbar_{l+1})$, $l\in\{0,\ldots,\bar{L}(h_k))-1\}$, with $\hbar_0=0$, $k\in\Nat$, and for all $x\in\Real^{n_x}$. Again realising that for all $t\in[t_k+\hbar_l,t_k+\hbar_{l+1})$, $l\in\{0,\ldots,\bar{L}(h_k)-1\}$, $k\in\Nat$, it holds that $e^{-\bar\alpha_1 (t_k+\hbar_l)}< e^{-\bar\alpha_1 t + \bar\alpha_1\Delta_\hbar}$ and that $e^{-\bar\alpha_1 t_k}< e^{-\bar\alpha_1 t + \bar\alpha_1 \hbar_{\bar{L}(h)}} \leqslant e^{-\bar\alpha_1 t + \bar\alpha_1 \hbar_L}$ yields \eqref{eq5:stab_def} with gain $c=\bar{c}(\bar\alpha_1,\beta,\Delta_\hbar,\hbar_L)$ as in \eqref{eq5:constant}.\hfill~$\square$

~\\

\paragraph{Proof of Lemma \ref{th5:ct_lyap}:}
The proof follows the same line of reasoning as in \cite{kie_ada_ste_TAC92,pol_TAC95}. GES of \eqref{eq5:plant} with \eqref{eq5:controller} with convergence rate $\hat\alpha$ and gain $\hat{c} = \overline{a}/\underline{a}$ is implied by the existence of a positive definite function, satisfying \eqref{eq5:stab_cond_a} and
\begin{equation}
\lim_{s\downarrow0} \tfrac{1}{s} \big( V(x(t+s)) - V(x(t)) \big) \leqslant -\hat\alpha V(x(t)), \label{eq5:interevent2}
\end{equation}
for all $t\in\Real_+$, which follows from the Comparison Lemma, see, e.g., \cite{khalil_1996}. Now using the fact that the solutions to \eqref{eq5:plant} with \eqref{eq5:controller} satisfy $\dd{t}x = (A+BK) x$, and using \eqref{eq5:lyapunov}, we obtain that \eqref{eq5:interevent2} is implied by
\begin{align}
\lim_{s\downarrow0} \tfrac{1}{s} ( \| P (I + s (A+BK)) x(t) \|_\infty - \| P x(t) \|_\infty ) \leqslant -\hat\alpha \| P x(t) \|_\infty,\!\label{eq5:interevent5}
\end{align}
for all $t\in\Real_+$. Using (\ref{eq5:ct_lyap}a), we have that, for all $t\in\Real_+$, \eqref{eq5:interevent5} implied by
\begin{align}
\!\!\lim_{s\downarrow0} \tfrac{1}{s} ( \| (I + s Q ) \|_\infty - 1 ) \| P x(t) \|_\infty \leqslant -\hat\alpha \| P x(t) \|_\infty,
\end{align}
which is, due to positivity of $\|Px\|_\infty$ for all $x\neq0$, equivalent to $\lim_{s\downarrow0} \tfrac{1}{s} ( \| (I + s Q ) \|_\infty - 1 ) \leqslant -\hat\alpha$, which is implied by (\ref{eq5:ct_lyap}b). This completes the proof.\hfill~$\square$

~\\

\paragraph{Proof of Lemma \ref{th5:interevent}:}
The proof is based on showing that the Lyapunov function obtained using Lemma \ref{th5:ct_lyap} also guarantees \eqref{eq5:plant} and \eqref{eq5:controllaw}, with \eqref{eq5:dt_controller} and $t_{k+1} = t_k + h$, $k\in\Nat$, to be GES with convergence rate $\alpha$ and gain $c:=\bar{c}(\alpha,\beta,h)$, where $\bar{c}(\alpha,\beta,h)$ as in \eqref{eq5:constant_a}, for all $h<h_{\max}(\alpha)$ as in \eqref{eq5:dt_lyap}. To do so, observe that the solutions of \eqref{eq5:plant} and \eqref{eq5:controllaw}, with \eqref{eq5:dt_controller} and $t_{k+1} = t_k + h$, $k\in\Nat$, satisfy
\begin{equation}
x(t) = ( e^{A (t-t_k)} + \textstyle\int_0^{t-t_k}e^{As} BK \mathrm{d}s ) x(t_k), \label{eq5:interevent1a}
\end{equation}
for all $t\in[t_k,t_k+h)$, $k\in\Nat$, which can be bounded as
\begin{align}
\| x(t) \| &\leqslant \big(e^{\|A\| h} + \textstyle\int_0^h e^{\|A\|s} \mathrm{d}s\, \|B\|\, \|K\| \big) \| x(t_k) \|,
\end{align}
for all $t\in[t_k,t_k+h)$, $k\in\Nat$. Now by following the ideas used in the proof of Lemma \ref{th5:stab}, and the candidate Lyapunov function of the form \eqref{eq5:lyapunov}, we have that GES with convergence rate $\alpha$ and gain $c$ of \eqref{eq5:plant} and \eqref{eq5:controllaw}, with \eqref{eq5:dt_controller} and $t_{k+1} = t_k + h$, $k\in\Nat$, is implied by requiring that
\begin{equation}
\| P x(t_k+h) \|_\infty - e^{-\alpha h} \| P x(t_k) \|_\infty \leqslant 0, \label{eq5:interevent1}
\end{equation}
for all $t_k$, $k\in\Nat$, and some well-chosen $h>0$. Substituting \eqref{eq5:interevent1a} and defining $\hat{x} := P x$, yielding $x = (P^\top P)^{-1}P^\top \hat{x}$, yields that that \eqref{eq5:interevent1} is implied by
\begin{align}
( \| P ( e^{A h}\!+\!\textstyle\int_0^h\!e^{As}BK \mathrm{d}s ) (P^{\!\top}P)^{-1}P^{\!\top}\|_\infty - e^{-\alpha h}) \| \hat{x}(t_k) \|_\infty \leqslant 0,
\end{align}
for all $\hat{x}(t_k)\in\Real^m$, which holds for all $h>0$, satisfying $h<h_{\max}(\alpha)$, as given in the hypothesis of the lemma, meaning that \eqref{eq5:interevent1} holds for all $\hat{x}(t_k)\in\Real^m$ and for all $h>0$, satisfying $h<h_{\max}(\alpha)$. This completes the proof.\hfill~$\square$

~\\

\paragraph{Proof of Theorem \ref{th5:cor1}:}
As a result of Lemma \ref{th5:interevent}, we have that the control input given by \eqref{eq5:dt_controller} renders the plant \eqref{eq5:plant} with ZOH \eqref{eq5:controllaw} GES with convergence rate $\alpha$ and gain $c:=\bar{c}(\alpha,\|K\|_\infty,h)$ as in \eqref{eq5:constant_a}, for any interexecution time $h<h_{\max}(\alpha)$ as in \eqref{eq5:dt_lyap}. To obtain a well-defined control law, we need that $F_{\mathrm{MAC}}(x)\neq\emptyset$, for all $x\in\Real^{n_x}$, which is guaranteed if and only if \eqref{eq5:minimum} satisfies $F_1(x)\neq\emptyset$ for all $x\in\Real^{n_x}$, as argued in Section \ref{th5:problem3}. This can be achieved by choosing $\beta\geqslant\|K\|_\infty$ and choosing the set $\Hcal:=\{\hbar_1,\ldots,\hbar_L\}$, $L\in\Nat$, such that $\hbar_1<h_{\max}(\alpha)$, as this yields that $F_1(x) \supseteq \{ K x \} \neq \emptyset$, if $V$ is chosen as in \eqref{eq5:lyapunov}. GES with the convergence rate $\alpha$ and the gain $c\geqslant\bar{c}(\alpha,\beta,\Delta_\hbar,\hbar_L)$ of \eqref{eq5:plant} with ZOH \eqref{eq5:controllaw} and \eqref{eq5:def_minimum}, with \eqref{eq5:f}, \eqref{eq5:minimum}, \eqref{eq5:minimum1}, \eqref{eq5:minimum2} and \eqref{eq5:lyapunov}, follows directly from Theorem \ref{th5:MAC}. This completes the proof.\hfill~$\square$

~\\

\paragraph{Proof of Theorem \ref{th5:cor2}:}
As a result of Lemma \ref{th5:interevent}, we have that the control input given by \eqref{eq5:dt_controller}, renders the plant \eqref{eq5:plant} with ZOH \eqref{eq5:controllaw} GES with a convergence rate $\alpha$, a gain $c:=\bar{c}(\alpha,\|K\|_\infty,h)$ as in \eqref{eq5:constant_a}, for any execution interval smaller than $h_{\max}(\alpha)$, as in \eqref{eq5:dt_lyap}. To obtain a well-defined control law, we need that $F_{\mathrm{AAC}}(x)\neq\emptyset$, for all $x\in\Real^{n_x}$, which is guaranteed if and only if \eqref{eq5:anytime} satisfies $F_{L,1}(x)\neq\emptyset$ for all $x\in\Real^{n_x}$, as argued in Section \ref{th5:problem4}. This can be achieved by choosing $\alpha\leqslant\bar\alpha_1<\hat\alpha$, the control gain bound $\beta\geqslant\|K\|_\infty$ and choosing the set $\Hcal:=\{\hbar_1,\ldots,\hbar_L\}$, $L\in\Nat$, such that $\hbar_L<h_{\max}(\alpha)$, as this yields that $F_{L,1}(x)\supseteq\{ K x \} \neq \emptyset$, if $V$ is chosen as in \eqref{eq5:lyapunov}. GES with the convergence rate $\alpha$ and the gain $c\geqslant \bar{c}(\bar\alpha_1,\beta,\Delta_\hbar,\hbar_L)$ of \eqref{eq5:plant} with ZOH \eqref{eq5:controllaw} and \eqref{eq5:def_anytime}, with \eqref{eq5:f}, \eqref{eq5:anytime}, \eqref{eq5:anytime2}, \eqref{eq5:anytime1} and \eqref{eq5:lyapunov}, follows directly from Theorem \ref{th5:AAC}. This completes the proof.\hfill~$\square$

\bibliographystyle{./IEEEtran}
\bibliography{CDCpaper}

\begin{thebibliography}{10}
\providecommand{\url}[1]{#1}
\csname url@samestyle\endcsname
\providecommand{\newblock}{\relax}
\providecommand{\bibinfo}[2]{#2}
\providecommand{\BIBentrySTDinterwordspacing}{\spaceskip=0pt\relax}
\providecommand{\BIBentryALTinterwordstretchfactor}{4}
\providecommand{\BIBentryALTinterwordspacing}{\spaceskip=\fontdimen2\font plus
\BIBentryALTinterwordstretchfactor\fontdimen3\font minus
  \fontdimen4\font\relax}
\providecommand{\BIBforeignlanguage}[2]{{%
\expandafter\ifx\csname l@#1\endcsname\relax
\typeout{** WARNING: IEEEtran.bst: No hyphenation pattern has been}%
\typeout{** loaded for the language `#1'. Using the pattern for}%
\typeout{** the default language instead.}%
\else
\language=\csname l@#1\endcsname
\fi
#2}}
\providecommand{\BIBdecl}{\relax}
\BIBdecl

\bibitem{che_fra_BOOK95}
T.~Chen and B.~A. Francis, \emph{Optimal Sampled-Data Control Systems}.\hskip
  1em plus 0.5em minus 0.4em\relax Springer-Verlag, 1995.

\bibitem{ast_wit_BOOK97}
K.~J. {\AA}str\"{o}m and B.~Wittenmark, \emph{Computer Controlled
  Systems}.\hskip 1em plus 0.5em minus 0.4em\relax Prentice Hall, 1997.

\bibitem{tab_TAC07}
P.~Tabuada, ``Event-triggered real-time scheduling of stabilizing control
  tasks,'' \emph{IEEE Trans. Autom. Control}, vol.~52, pp. 1680--1685, 2007.

\bibitem{hee_san_bos_IJC08}
W.~P. M.~H. Heemels, J.~H. Sandee, and P.~P.~J. van~den Bosch, ``Analysis of
  event-driven controllers for linear systems,'' \emph{Int. J. Control},
  vol.~81, pp. 571--590, 2008.

\bibitem{hen_joh_cer_AUT08}
T.~Henningsson, E.~Johannesson, and A.~Cervin, ``Sporadic event-based control
  of first-order linear stochastic systems,'' \emph{Automatica}, vol.~44, pp.
  2890--2895, 2008.

\bibitem{lun_leh_AUT10}
J.~Lunze and D.~Lehmann, ``A state-feedback approach to event-based control,''
  \emph{Automatica}, vol.~46, pp. 211--215, 2010.

\bibitem{vel_fue_mar_RTSS03}
M.~Velasco, J.~M. Fuertes, and P.~Marti, ``The self triggered task model for
  real-time control systems,'' in \emph{Proc. IEEE Real-Time Systems
  Symposium}, 2003, pp. 67--70.

\bibitem{wan_lem_TAC09}
X.~Wang and M.~Lemmon, ``Self-triggered feedback control systems with
  finite-gain {$\mathcal{L}_2$} stability,'' \emph{IEEE Trans. Autom. Control},
  vol.~45, pp. 452--467, 2009.

\bibitem{maz_ant_tab_ECC09}
M.~Mazo~Jr., A.~Anta, and P.~Tabuada, ``An {ISS} self-triggered implementation
  of linear controllers,'' \emph{Automatica}, vol.~46, pp. 1310--1314, 2010.

\bibitem{gup_que_CDC10}
V.~Gupta and D.~E. Quevedo, ``On anytime control of nonlinear processes though
  calculation of control sequences,'' in \emph{Proc. Conf. Decision \&
  Control}, 2010, pp. 7564--7569.

\bibitem{gre_fon_bic_TAC11}
L.~Greco, D.~Fontanelli, and A.~Bicchi, ``Design and stability analysis for
  anytime control via stochastic scheduling,'' \emph{IEEE Trans. Autom.
  Control}, 2011.

\bibitem{gup_CDC09}
V.~Gupta, ``On an anytime algorithm for control,'' in \emph{Proc. Conf.
  Decision \& Control}, 2009, pp. 6218--6223.

\bibitem{bro_CDC97}
R.~W. Brockett, ``Minimum attention control,'' in \emph{Proc. Conf. Decision \&
  Control}, 1997, pp. 2628--2632.

\bibitem{ant_tab_CDC10}
A.~Anta and P.~Tabuada, ``On the minimum attention and anytime attention
  problems for nonlinear systems,'' in \emph{Proc. Conf. Decision \& Control},
  2010, pp. 3234--3239.

\bibitem{kie_ada_ste_TAC92}
H.~Kiendl, J.~Adamy, and P.~Stelzner, ``Vector norms as {L}yapunov function for
  linear systems,'' \emph{IEEE Trans. Autom. Control}, vol.~37, no.~6, pp.
  839--842, 1992.

\bibitem{pol_TAC95}
A.~Pola\'{n}ski, ``On infinity norms as {L}yapunov functions for linear
  systems,'' \emph{IEEE Trans. Autom. Control}, vol.~40, no.~7, pp. 1270--1274,
  1995.

\bibitem{son_SIAM83}
E.~Sontag, ``A {L}yapunov-like characterization of asymptotic
  controllability,'' \emph{SIAM J. Control Optim.}, vol.~21, no.~3, pp.
  462--471, 1983.

\bibitem{kel_tee_SCL04}
C.~M. Kellett and A.~R. Teel, ``Discrete-time asymptotic controllability
  implies smooth control-lyapunov function,'' \emph{Syst. \& Control Lett.},
  vol.~51, pp. 349--359, 2004.

\bibitem{wal_ye_CSM01}
G.~Walsh and H.~Ye, ``Scheduling of networked control systems,'' \emph{IEEE
  Control Syst. Mag.}, vol.~21, no.~1, pp. 57--65, 2001.

\bibitem{mpt}
\BIBentryALTinterwordspacing
M.~Kvasnica, P.~Grieder, and M.~Baoti\'{c}, ``{Multi-Parametric Toolbox
  (MPT)},'' 2004. [Online]. Available: \url{http://control.ee.ethz.ch/~mpt/}
\BIBentrySTDinterwordspacing

\bibitem{khalil_1996}
H.~K. Khalil, \emph{Nonlinear Systems}.\hskip 1em plus 0.5em minus 0.4em\relax
  Prentice Hall, 1996.

\end{thebibliography}

\end{document}